\newtheorem{thm}{Theorem}[section]
\newtheorem{mthm}{Main result}
\newtheorem{corollary}[thm]{Corollary}
\newtheorem{lem}[thm]{Lemma}
\newtheorem{proposition}[thm]{Proposition}
\newtheorem{definition}[thm]{Definition}
\newtheorem{remark}{Remark}[section]
\newcommand{\eps}{\varepsilon}
\begin{document}
\title[Controllability of the NLS via a low-dimensional source term]{Controllability of the cubic Schroedinger equation via a low-dimen\-sional source term}%
\author{Andrey Sarychev}
\address{DiMaD,
University of Florence, Italy}%
\email{asarychev@unifi.it}%
\date{}%
%\dedicatory{}%
%\commby{}%
\begin{abstract}
We  study controllability of $d$-dimensional defocusing cubic Schroe\-din\-ger equation under periodic boundary conditions. The control is applied additively, via a source term, which    is a  linear combination
of few  complex exponentials (modes) with  time-variant coefficients - controls. We manage to prove that controlling at most $2^d$  modes one can achieve controllability of the equation in any finite-dimensional projection of the  evolution space $H^{s}(\mathbb{T}^d), \ s>d/2$,  as well as  approximate controllability in $H^{s}(\mathbb{T}^d)$. We also present  negative result regarding exact controllability of cubic Schroedinger equation via a finite-dimensional source term.
\end{abstract}
% ----------------------------------------------------------------
\maketitle
% ----------------------------------------------------------------
%%%%%%%%%%%%%%%%%%%%%%%%%%%%%%%%%%%%%%%%%%%%%%%%%%%%%%%%%%%%%%%%%%%%

{\small\bf Keywords: semilinear Schroe\-din\-ger equation, approximate controllability, geometric control}%

{\bf AMS Subject Classification: 35Q55, 93C20, 93B05, 93B29} %
\vspace*{2mm}

\section{Introduction}
\label{intro} %%\renewcommand{\@evenhead}{\leftmark}

Lie algebraic approach of geometric control theory to the {\it nonlinear distributed systems}  has been initiated recently. An example of its implementation is  the  study of $2$-dimensional Navier-Stokes/Euler equations of fluid motion
controlled by {\it low-dimensional forcing} in  \cite{AS06,AS08}, where
one arranged sufficient criteria for approximate controllability and for controllability in finite-dimen\-si\-onal projections of  the evolution space.

Here we wish to develop similar approach for another  class of distributed system, which is  cubic defocusing Schroedinger equation  (cubic defocusing NLS):
\begin{equation}
-i \partial_t u(t,x) +  \Delta u(t,x) =|u(t,x)|^2u(t,x) +V(t,x), \ u|_{t=0}=u^0, \label{nls1}
\end{equation}
controlled via a  source term $V(t,x)$.

We treat the periodic case: the space variable $x$ belongs   to the   torus $\mathbb{T}^d$.

Our problem setting is distinguished by  two features.
First,  the control is introduced via a source term, i.e. in additive form, on the contrast to the bilinear
form, characteristic for quantum control models. Second  feature is
{\it finite-di\-men\-si\-onality of the range of the controlled source term}:
\begin{equation}\label{sote0}
V(t,x)=\sum_{k \in \hat{\mathcal K}}v_k(t)e^{ik \cdot x},
\end{equation}
 with  $\hat{\mathcal K} \subset \mathbb{Z}^d$ being   a finite set. Obviously  for each $t$ the value $V(t, \cdot)$ belongs to a   {\it finite-di\-men\-si\-onal subspace} $\mathcal{F}_{\hat{\mathcal K}}=\mbox{Span}\{e^{i k \cdot x}, \ k \in \hat{\mathcal K}\}$ of the evolution space for NLS.

The {\it control functions} $v_k(\cdot), \ k \in \hat{\mathcal K}$, which enter  the source term,
 can be chosen freely in ${\mathbf L}^\infty[0,T]$, or in any functional space, which
is dense in ${\mathbf L}^1[0,T]$.

    By the choice of 'low-di\-men\-si\-onal' control  our problem setting differs from the studies of controllability of NLS (see the end of the Section~\ref{surv} for few references to the alternative settings and approaches), in which controls have  infinite-di\-men\-si\-onal range.
       In some of the studies the controls  have their  support on a subdomain and one is interested in tracing the propagation of the controlled energy to other parts of the domain. On the contrast, in our case the controls affect few modes - 'directions' -  in  the functional evolution space for NLS and we are in\-te\-rested in the way this controlled action spreads to other (higher) modes.

 We will treat   NLS equation (\ref{nls1}) as  an evolution equation in  Sobolev space $H^{s}(\mathbb{T}^d), \ s>d/2$.
 By opting  for  'higher regularity' of solutions with respect to the  space variables we seek to  avoid analytic complications (see Remark~\ref{stri}), which are not directly related to the controllability issue under study.

  Choosing   initial condition $u(0)=u_0 \in H^{s}(\mathbb{T}^d),$ we set the problems of:
\begin{enumerate}
    \item  controllability in finite-di\-men\-si\-onal projections, meaning that one can steer  in time $T>0$ the  trajectory of the equation (\ref{nls1}) from  $u_0$ to a state $\hat{u} \in H^{s}$
        with any preassigned orthogonal
         \footnote{In the formulation of results below and in the course of the proofs we speak of  orthogonal projections on finite-di\-men\-si\-onal subspaces. The formulations and proofs are valid for orthogonality coming from  either $L^2$ inner product, or $H^s$ inner product. Note that the complex exponentials form orthogonal system with respect to both products.}
                  projection $\Pi^{\mathcal{L}}\hat{u}$ onto a  given finite-di\-men\-si\-onal subspace $\mathcal{L} \subset H^{s}$;
  \item  approximate controllability meaning that the attainable set of (\ref{nls1}) from each $u_0$ is dense in $H^{s}$;
  \item exact controllability in $H^{s}$.
%%%  \marginpar{Check $H^1$}
    \end{enumerate}

Definitions of several  types of controllability and exact problem setting are provided in the next Section together with the main results. First of the results asserts  that controllability  in projection on each finite-di\-men\-si\-onal subspace of $H^{s}$ and approximate controllability in $H^{s}$ can be achieved by applying controls to at most  $2^d$ modes, which can be chosen  the same for all subspaces. Propositions~\ref{z1},\ref{satu} describe  classes of saturating sets of controlled modes which suffice for achieving these types of controllability. The second main result asserts lack of exact controllability in $H^{s}$ by controlling any finite number of modes.

The author is grateful to two anonymous referees whose thorough refereeing and valuable remarks
helped to improve the presentation in the revised version.

\section{Cubic Schroedinger equation on $\mathbb{T}^d$; problem setting and main results} \label{schr}

\subsection{Controllability: definitions}\label{codef}

\subsubsection{Global controllability}
As we mentioned before the evolution space of NLS equation is
Sobolev space $H=H^{s}(\mathbb{T}^d), s>d/2$.

We say that   the  controlling source term  (\ref{sote0})
{\it steers the system (\ref{nls1})
from $u_0 \in H$ to $\hat{u} \in H$ in
time $T>0$}, if mild (see Section~\ref{prel}) solution of (\ref{nls1})
exists, is unique
and satisfies $u(T)=\hat{u}$. The set $\mathcal{A}_{T,u^0}$ of points
to which the system can be steered to from $u^0$ in time $T>0$ is called the {\it attainable set}.

The equation is {\it globally time-$T$ (exactly) controllable} from $u_0$,
if it can be steered in time $T$ from $u_0$ to any point of $H$, or, the same, $\mathcal{A}_{T,u^0}=H$.

%%it is globally (exactly) controllable from $u_0$, if for each $\hat{u}$ the equation can be steered from
%%$u_0$ to $\hat{u}$ in some time $T>0$.

\subsubsection{Controllability in finite-di\-men\-si\-onal projections and in finite-dimensio\-nal components}

Let $\mathcal L$ be  closed linear subspace of $H$, $\Pi^{\mathcal L}$ be the orthogonal projection of $H$ onto ${\mathcal L}$.

The equation (\ref{nls1})-(\ref{sote0}) is time-$T$ globally {\it controllable  in projection
on $\mathcal L$}, if for each $\hat{q} \in \mathcal L$ this equation can be steered in time $T$  to some point $\hat{u}$ with $ \Pi^{\mathcal L}(\hat{u})=\hat{q}$.

The NLS equation (\ref{nls1})-(\ref{sote0}) is time-$T$ globally {\it controllable  in finite-dimen\-si\-onal projections} if for each finite-di\-men\-si\-onal subspace $\mathcal L$ the equation is time-$T$ globally controllable  in projection
on $\mathcal L$;  the set $\hat{\mathcal K}$ of controlled modes is assumed to be the same for all $\mathcal L$.

Whenever $\mathcal{L}$ is a 'coordinate subspace' $\mathcal{L}=\mathcal{F}_{\mathcal{K}^{o}}=\mbox{Span}\{e^{ik\cdot x}| \ k \in \mathcal{K}^{o}\}$,
with a finite set $\mathcal{K}^{o} \subset \mathbb{Z}^d$  of {\em observed modes}, then controllability in projection on $\mathcal{L}$ is called {\it controllability in observed $\mathcal{K}^{o}$-component}.

\begin{remark}\label{pil}
It is convenient to characterize the
 time-$T$ controllability    in terms of   surjectiveness of the
 %%%\begin{equation}\label{epm}
{\it end-point map}  $E_T : v(\cdot)  \to u(T)$,
%%%\end{equation}
of  the controlled  NLS equation (\ref{nls1})-(\ref{sote0}).  The map  $E_T$    maps
 a control $v(\cdot)=\{v_k(\cdot)| k \in \hat{\mathcal K}\},$  into the end-point $u(T)$ of the
trajectory $u(t)$ of (\ref{nls1}),  driven by the source term (\ref{sote0}). Equivalently one can  see the end-point map, as mapping
the controlling source term $V=\sum_{k \in \hat{\mathcal K}}v_k(t)e^{ik \cdot x}, \ t \in [0,T]$
  to $u(T)$.

The time-$T$ controllability  in projection on $\mathcal{L}$ means surjectiveness  of the composition
 $\Pi_{\mathcal{L}} \circ E_T. \ \Box$
\end{remark}

\subsubsection{Approximate controllability}

The NLS equation  (\ref{nls1})-(\ref{sote0}) is time-$T$ approximately controllable from $u_0$ in $H$, if it can be steered to each point
%%\marginpar{check $H^1$}
of a dense subset of $H. \ \Box$

\subsubsection{Solid controllability (cf. \cite{AS08})}
\label{soc}
On the contrast to  the previous definitions the word 'solid' does not refer to a new  type of controllability
but means  stability of the controllability
 property with respect to certain classes of perturbations.

Let $\Phi: \mathcal{M}^1 \mapsto \mathcal{M}^2$ be a continuous map
between two metric spaces, and $S \subseteq \mathcal{M}^2$ be any
subset. We say that {\it $\Phi$ covers $S$ solidly}, if $\Phi(\mathcal{M}^1) \supseteq S$ and the inclusion is stable with respect to
$C^0$-small perturbations of $\Phi$, i.e. for some
$C^0$-neighborhood $\Omega$ of $\Phi$ and for each map $\Psi \in
\Omega$, the inclusion  $\Psi(\mathcal{M}^1) \supseteq S$ holds.

Given a finite-di\-men\-si\-onal subspace $\mathcal{L} \subset H^s$ and a bounded set $S \subset \mathcal{L}$  we  say that $S$ is solidly attainable  for (\ref{nls1})-(\ref{sote0})  in time $T$, if
there exists a family  of controls $\{v(\cdot, b)| \  b \in B_S\}, \ v(\cdot ,b)=\{v_k(\cdot,b)| k \in \hat{\mathcal K}\}$ parameterized by a compact set $B_S \subset \mathbb{R}^N$,
such that  the map
$b \mapsto \left(\Pi^{\mathcal L}
\circ E_T\right)(v(\cdot,b))$ is continuous on $B_S$ and covers $S$ solidly.

 Time-$T$ solid controllability in projection on finite-di\-men\-si\-onal subspace $\mathcal{L}$
for the NLS equation (\ref{nls1})-(\ref{sote0})  means that any bounded  subset $S \subset \mathcal L$
is solidly attainable in time $T$.

\subsection{Main results}

       Our first goal is establishing   for the cubic defocusing NLS  on $\mathbb{T}^d$  sufficient criteria of controllability in  finite-di\-men\-si\-onal projections and of approximate controllability in Sobolev space $H^{s}, \ s>d/2$. The common criterion is  formulated in terms of   the set of controlled modes $\hat{\mathcal K}$, which  is fixed and the same for {\it all} projections and for the approximate controllability.

\begin{mthm}[sufficient criterion for controllability in finite-di\-men\-si\-onal
projections and for approximate controllability]
\label{mainth}
For each integer $d \geq 1$ there exists a $2^d$-element set
$\hat{\mathcal K}$  of
controlled modes  ($\hat{\mathcal K} \subset \mathbb{Z}^d$) such that
 for each $s> d/2$, any  initial
data  $u^0 \in H^{s}(\mathbb{T}^d)$ and any $T>0$ the defocusing cubic Schroe\-din\-ger equation (\ref{nls1}) on $\mathbb{T}^d$,  controlled via the source term
(\ref{sote0}) is:  i)   time-$T$
controllable in  projection on each finite-di\-men\-si\-onal subspace   $\mathcal{L}$ of $H^s(\mathbb{T}^d)$; ii) time-$T$ approximately controllable in $H^{s}(\mathbb{T}^d). \ \Box$
\end{mthm}

Our second result  is

\begin{mthm}[lack of  exact controllability via a finite-di\-men\-si\-onal additive control]
\label{2main}
Let   the defocusing cubic Schroe\-din\-ger equation (\ref{nls1})  on $\mathbb{T}^d, \ d \geq 1,$ be  controlled via a source term
(\ref{sote0}), with the  set
$\hat{\mathcal K} \subset \mathbb{Z}^d$ of controlled modes being arbitrary finite.  Then for each $s>d/2$, each  $T>0$ and each initial
 data  $u^0 \in H^s(\mathbb{T}^d)$, the time-$T$ attainable set $\mathcal{A}_{T,u^0}$
 of (\ref{nls1})-(\ref{sote0})  is contained in a countable union of compact subsets of $H^s(\mathbb{T}^d)$ and hence its  complement
$H^s(\mathbb{T}^d) \setminus  \mathcal{A}_{T,u^0}$ is dense in $H^s(\mathbb{T}^d). \ \Box$
\end{mthm}

\section{Outline of the approach: Lie extensions, fast-oscillating controls, resonances. Other approaches}
\label{surv}

  Study of controllability of  the NLS equation is based  (as well as our earlier work \cite{AS06},\cite{AS08} on Navier-Stokes/Euler equation)  on the method of iterated Lie extensions.
  {\it Lie extension} of  a control system $\dot{x}=f(x,u), \ u \in U$ is a way to add vector fields to the right-hand side of the system, maintaining at the same time (almost) invariance of its controllability properties. The additional vector fields are expressed via the Lie brackets of the vector fields $f(\cdot ,u)$ for various $u \in U$.
  If after a series of extensions one arrives to a controllable (extended) system, then the controllability of the original system would follow.

This approach can not be propagated  automatically to the  infinite-di\-men\-si\-onal setting due to the
lack of adequate  Lie algebraic tools.  So far  Lie algebraic viewpoint in the infinite-di\-men\-si\-onal  context  is  mainly  used    as a guiding tool; the   implementation has to  be  justified by the analytic means. In the rest of this Section we provide geometric control sketch for  the proof of main result.

When studying controllability we will look at the  cubic NLS equation as at a  particular type of (infinite-di\-men\-si\-onal) {\it control-affine system}:
$$-i\partial_t u=c(u,t)+\sum_{k \in \hat{\mathcal K}}e_k v_k(t),$$
where $c(u,t)$ is the cubic {\it drift} vector field, $e_k$ are  constant  (independent of $u$)  {\it controlled} vector field in $H^s(\mathbb{T}^d)$ with  the values $e^{ik \cdot x}$.

The Lie extensions, we   implement,   involve two  controlled vector fields $e_m,e_n$ and the drift vector field $c$. An outcome of each extension is  the fourth-order  Lie bracket
$ [e_n,[e_m,[e_m,c]]]$, which is again constant vector field (given that the vector field $c$ is cubic). The Lie bracket   is seen as  a direction  of  the action of an {\it extended control}.

Another Lie bracket, which makes its appearance  at  each Lie extension,  is
 the third-order Lie bracket $[e_m,[e_m,c]]$.  The motion of the system along its direction is 'unilateral'; hence it can be seen as  {\it obstruction to controllability}. This motion  can not be locally reverted nor compensated but for  the NLS equation  one can nullify it  {\it in average} by imposing integral (isoperimetric) relations onto the controls involved in the extension.

To design the needed motion in the extending direction $[e_n,[e_m,[e_m,c]]]$ and to oppress the obstructing motions, we  employ  {\it fast-oscillating controls}. Use of such controls is traditional for geometric control theory.  A  'general theory' of their application is hardly available,
but particular cases can be effectively studied (see, for example \cite{SuL}).

           In our study we invoke for each extension a couple of  fast-oscillating controls
           $$v_m(t)e^{i a_mt/\eps}e_m,v_n(t)e^{i a_nt/\eps}e_n,$$
           feed them into the right-hand side of the NLS equation and trace the  interaction of the two controls  via the cubic term of the equation.   The idea is to design needed resonance in the course of the interaction, that is to choose  oscillation frequences  and magnitudes of controls in such a way that the {\it averaged interaction}   influences dynamics of  specific  modes.

           By our design we manage to limit the influence of the resonance\footnote{Other choices of resonant modes are possible, being this one one of simplest}  to a single  mode  $e_{2m-n}=e^{i(2m-n) \cdot x}$;   the resonance term will be  seen as an (extending) control for   this mode. The procedure  can be  interpreted as  an {\it elementary extension} of the set of controlled modes: $ \hat{\mathcal K} \mapsto \hat{\mathcal K} \bigcup \{2m-n\},  \ m,n \in \hat{\mathcal K}.$

 Final controllability result (Main result 1) is obtained by (finite) iteration of the    elementary  extensions. If one seeks controllability in  observed $\mathcal{K}^o$-component with $\mathcal{K}^o \supset \hat{\mathcal K}$, then one should find (when possible)  a series of elementary extensions  $\hat{\mathcal K} = \mathcal{K}^1 \subset \mathcal{K}^2 \subset \cdots \subset \mathcal{K}^N=\mathcal{K}^o$. Getting extended controls  available for each observed mode $k \in \mathcal{K}^o$,  we may conclude  {\it controllability of  extended system in $\mathcal{K}^o$-component} by    Lemma~\ref{m=1}.
On the contrast controllability of the original system  in $\mathcal{K}^o$-component will follow by virtue of  technical Approximative Lemma~\ref{appex}, which formalizes the resonance design.

From the controllability in  {\it all} finite-di\-men\-si\-onal components  one derives  controllability in projection on each finite-di\-men\-si\-onal subspace  as well as  the approximate controllability; this  is proved in  Section~\ref{proapc}.

Note that in the case of {\it periodic} NLS equation the analysis of interaction of different terms via cubic nonlinearity  is substantially simplified by  our choice of a special basis of  exponential  modes.

   Besides the design of   proper resonances there are two {\it analytic}  problems to be fixed. First problem consists of studying NLS with fast-oscillating right-hand sides  and of establishing the continuity, the approximating properties and   the {\it limits} of corresponding trajectories, as the frequency of oscillation tends to infinity.  Second problem is  coping  with the fact that at each iteration we are only able to {\it approximate} the desired motion, therefore the controllability criteria need to be  stable with respect to the approximation errors.

The second problem is fixed with the help of the notion of {\it solid
controllability} (see previous Section).

The solution to the first problem  in finite-di\-men\-si\-onal setting   is provided  by
{\it theory of relaxed controls} (\cite{Gam}). For general nonlinear PDE such theory is unavailable;
   while for {\it semilinear infinite-di\-men\-si\-onal control systems} relaxation results have been obtained in \cite{Fr,Fat}. We provide formulations and proofs needed for our analysis in Subsection~\ref{corel}.

  What regards negative result on exact controllability (Main result~2), then the key point for its proof is endowing the (functional) space of controls with a weaker topology, in which this space is a countable union of compacts and  the end-point  map is Lipschitzian in the respective metric. Then the attainable  sets - the images of the end point map - are meager.
   Similar kind of argument has been used in \cite{BMS} for establishing noncontrollability of some bilinear distributed systems. Finer method, based on estimates of Kolmogorov's entropy has been invoked in \cite{Shi1} for proving lack of exact controllability by finite-di\-men\-si\-onal forcing for Euler equation of fluid dynamics.

At the end of the Section we wish to mention  few references to other approaches to controllability of linear and  semilinear Schroedinger equation controlled via  bilinear  or additive control, this latter being "internal"  or boundary.

References  \cite{Zu,ILT}  provide nice surveys of the results on:
\begin{itemize}
  \item exact controllability for linear Schroedinger equation with additive control in relation to observability of adjoint system and to geometric control condition (see also \cite{Le});
     \item controllability of linear Schroedinger equation with control entering bilinearly; besides references in the above cited surveys there are notable results \cite{Bea,BC} on local (exact) controllability in $H^7$ of 1-D equation; another interesting result is (obtained by geometric methods) criterion \cite{CMSB} of approximate controllability for the case in which 'drift Hamiltonian' has discrete non-resonant spectrum (see bibliographic references in \cite{Bea,BC,CMSB} to the preceding work);
           \item exact controllability of semilinear Schroedinger equation by means of internal additive control; in addition to references in \cite{Zu,ILT} we mention more recent publications \cite{DGL,RZ} where the property has been established for 2D and 1D cases.     The key tool in the study of the semilinear case is 'linearization principle', going back to \cite{LT}. On the  contrast our approach makes exclusive use of  nonlinear term.
\end{itemize}

\section{Preliminaries on existence, uniqueness and continuous dependence of trajectories}
\label{prel}

  In this Section we collect results on existence/uniqueness and  continuity in the right-hand side
   for the {\it mild} solutions of a Cauchy problem for a class of semilinear equations:
\begin{equation}\label{nlsmod}
(-i\partial_t + \Delta)u=G(t,u), \  u(0)=\tilde{u}^0.
\end{equation}
The mild solutions satisfy integral form of the equation \eqref{nlsmod}, obtained via Duhamel formula:
%%%\begin{equation}
   $$u(t)=e^{it\Delta}\left(\tilde{u}^0+ i\int_0^te^{-i\tau \Delta}G(\tau, u(\tau))d\tau\right).$$
   %%%%   \label{imod}
%%%\end{equation}

The mild solutions  $u(\cdot)$  are sought in the space $C([0,T];H)$, with $H$, being Hilbert space of functions $u(x)$ defined on $\mathbb{T}^d$. We opt for $H=H^{s}(\mathbb{T}^d), \ s>d/2$.

Accordingly a   'perturbation' of the Cauchy problem \eqref{nlsmod}:
\begin{equation}\label{nlsper}
(-i\partial_t + \Delta)u=G(t,u)+\phi(t,u), u(0)=u^0,
\end{equation}
 admits the integral form
\begin{equation}
   u(t)=e^{it\Delta}\left(u^0+ i\int_0^t e^{-i\tau \Delta} \left(G(\tau,u(\tau))+\phi(\tau,u(\tau))\right)d\tau\right), \label{iper}
\end{equation}
whose solutions are the mild solutions of \eqref{nlsper}.

We assume the nonlinear terms  $G(\cdot ,\cdot),\phi(\cdot  ,\cdot): \ [0,T] \times H \to H$ in \eqref{nlsmod}-\eqref{iper}
to
satisfy the conditions:
\begin{eqnarray}
%%%\label{00} \forall t \in [0,T]: \ G(t,0)=0, \\
G,\phi \ \mbox{are continuous;} \label{congfi} \\
 \forall c>0, \ \exists \beta_c(t) \in {\mathbf L}^1([0,T],\mathbb{R}_+), \ \mbox{such that} \
  \forall t \in [0,T], \ \forall \|u\| \leq c, \nonumber \\
 \label{gbip}  \|G(t,u)\|_H \leq \beta_c(t), \  \|G(t,u')-G(t,u)\|_H \leq \beta_c (t) \|u'-u\|_H,\\
 \label{fbip} \|\phi(t,u)\|_H \leq \beta_c(t),  \  \|\phi(t,u')-\phi(t,u)\|_H \leq \beta_c (t) \|u'-u\|_H .
 \end{eqnarray}

\begin{proposition}[local existence and uniqueness of mild solutions]
\label{loex}
Let $G$ satisfy  the conditions \eqref{congfi},(\ref{gbip}).
Then for each $c>0, \ \exists T_c >0$ such that   for $\|\tilde{u}^0\|_{H} \leq c$
there exists unique mild solution $u(\cdot) \in C([0,T_c], H)$
of the Cauchy problem   (\ref{nlsmod}).  $\Box$
\end{proposition}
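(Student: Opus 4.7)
The plan is to apply the Banach contraction principle to the Duhamel operator on a closed ball of $C([0,T_c],H)$, exploiting the fact that the Schroedinger group $e^{it\Delta}$ is an isometry of $H=H^s(\mathbb{T}^d)$.

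First I would introduce the Duhamel map
\[
  \Phi(u)(t)=e^{it\Delta}\tilde{u}^0+i\int_0^t e^{i(t-\tau)\Delta}G(\tau,u(\tau))\,d\tau,
\]
defined on the closed ball
\(
  B_R=\{u\in C([0,T_c],H):\ \sup_{t\in[0,T_c]}\|u(t)\|_H\le R\}
\)
with $R=2c$. Since $\{e^{it\Delta}\}_{t\in\mathbb{R}}$ is a unitary group on each $H^s(\mathbb{T}^d)$ (the Laplacian is diagonalized on the Fourier basis $e^{ik\cdot x}$ with purely imaginary eigenvalues), we have $\|e^{it\Delta}w\|_H=\|w\|_H$ for every $w\in H$, which lets us discard the propagator factors inside norms.

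Using \eqref{gbip} with constant $R=2c$, for any $u\in B_R$ and $t\in[0,T_c]$,
\[
  \|\Phi(u)(t)\|_H\le \|\tilde{u}^0\|_H+\int_0^t\beta_R(\tau)\,d\tau\le c+\int_0^{T_c}\beta_R(\tau)\,d\tau,
\]
while for $u_1,u_2\in B_R$,
\[
  \|\Phi(u_1)(t)-\Phi(u_2)(t)\|_H\le\|u_1-u_2\|_{C([0,T_c],H)}\int_0^{T_c}\beta_R(\tau)\,d\tau.
\]
Because $\beta_R\in \mathbf{L}^1([0,T],\mathbb{R}_+)$, absolute continuity of the Lebesgue integral supplies a $T_c=T_c(c)>0$ so small that $\int_0^{T_c}\beta_R(\tau)\,d\tau\le\min\{c,1/2\}$. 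This simultaneously makes $\Phi$ map $B_R$ into itself and act as a contraction with ratio $\le 1/2$; continuity of $t\mapsto\Phi(u)(t)$ in $H$ is provided by the strong continuity of $e^{it\Delta}$ on $H$ together with \eqref{congfi} and dominated convergence. The Banach fixed point theorem then yields a unique fixed point $u\in B_R$, which is the mild solution. Uniqueness \emph{within} $C([0,T_c],H)$ rather than only within $B_R$ follows from a standard Gronwall argument: any two mild solutions $u,\tilde u$ agree on $[0,T_c]$ since, bounding both by some common $R'$ on a maximal interval of coincidence,
\(
  \|u(t)-\tilde u(t)\|_H\le\int_0^t\beta_{R'}(\tau)\|u(\tau)-\tilde u(\tau)\|_H\,d\tau
\)
forces $u\equiv\tilde u$ on $[0,T_c]$.

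The main obstacle in this type of argument is usually the algebraic/multiplicative structure of the nonlinearity; here, however, this is already encoded in the hypotheses \eqref{gbip}, which for $s>d/2$ are justified because $H^s(\mathbb{T}^d)$ is a Banach algebra, so a cubic map such as $u\mapsto|u|^2 u$ indeed satisfies locally bounded and locally Lipschitz estimates with an $\mathbf{L}^1$ majorant. The only genuine care required is (i) the use of the isometric action of $e^{it\Delta}$ on $H^s$ to avoid any loss of derivatives in the fixed point step, and (ii) uniformity of the existence time $T_c$ over the closed ball $\{\|\tilde{u}^0\|_H\le c\}$, which is built into the argument because the majorant $\beta_R$ depends only on $R=2c$ and not on the particular initial datum.
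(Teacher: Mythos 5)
Your proof is correct and follows exactly the route the paper indicates: the paper only remarks that ``the result is proved via fixed point argument for contracting map in $C([0,T];H)$'', and your Duhamel-map contraction on a ball of radius $2c$, using the isometry of $e^{it\Delta}$ on $H^s$ and the $\mathbf{L}^1$ majorant $\beta_c$ from \eqref{gbip}, together with the Gronwall argument for uniqueness in the whole space, is the standard realization of that sketch. No gaps.
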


The result is proved via fixed point argument for contracting map in $C([0,T];H)$.

 Our choice of  $H=H^s(\mathbb{T}^d), \ s>d/2$, guarantees that  the cubic term of the NLS equation (\ref{nls1}) would  satisfy conditions \eqref{congfi},(\ref{gbip})
 according to
 the following version of embedding theorem.

\begin{lem}['Product Lemma'; \cite{Tao}]\label{prol}
For Sobolev spaces $H^s(\mathbb{T}^d)$ of functions there holds:

for $s \geq 0: \  \|fg\|_{H^s} \leq C(s,d)\left(\|f\|_{H^s}\|g\|_{L^\infty}+\|f\|_{L^\infty}\|g\|_{H^s}\right) ;$

for $s> d/2: \ \|fg\|_{H^s} \le C'(s,d)\|f\|_{H^s}\|g\|_{H^s}. \ \Box  $
\end{lem}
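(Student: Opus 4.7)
The plan is to prove the first inequality via a Littlewood--Paley paraproduct decomposition and then deduce the second inequality from the embedding $H^s \hookrightarrow L^\infty$. Fix a dyadic partition of unity of $\mathbb{Z}^d$, with associated projectors $P_j$ satisfying $\|u\|_{H^s}^2 \sim \sum_{j\geq 0} 2^{2js}\|P_j u\|_{L^2}^2$ for $s \geq 0$ and the uniform bound $\|P_j u\|_{L^\infty} \leq C\|u\|_{L^\infty}$ (the convolution kernels of $P_j$ are dilates of a fixed Schwartz function on $\mathbb{R}^d$, hence bounded in $L^1$, and can be adapted to the torus in the standard way). Writing $f=\sum_j P_j f$ and $g=\sum_k P_k g$, I split
\[ fg \;=\; T_f g \;+\; T_g f \;+\; R(f,g),\]
with $T_f g = \sum_k (P_{<k-2}f)(P_k g)$, $T_g f$ the symmetric counterpart, and $R(f,g) = \sum_{|j-k|\leq 1}(P_j f)(P_k g)$.

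For the two paraproducts $T_f g$ and $T_g f$ the bound is immediate from almost orthogonality. Each summand $(P_{<k-2}f)(P_k g)$ has Fourier support in an annulus $\{|\xi|\sim 2^k\}$, so it is essentially the $k$-th Littlewood--Paley piece of $T_f g$; estimating $\|(P_{<k-2}f)(P_k g)\|_{L^2}\leq C\|f\|_{L^\infty}\|P_k g\|_{L^2}$ and inserting into the Littlewood--Paley characterisation of $H^s$ yields $\|T_f g\|_{H^s}\leq C\|f\|_{L^\infty}\|g\|_{H^s}$, and symmetrically $\|T_g f\|_{H^s}\leq C\|g\|_{L^\infty}\|f\|_{H^s}$.

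The main work is the high--high remainder $R(f,g)$, since a product $(P_j f)(P_j g)$ has Fourier support only in $\{|\xi|\lesssim 2^j\}$ and therefore contributes to \emph{every} $P_\ell$ with $\ell \leq j+O(1)$. I would bound $\|P_\ell R(f,g)\|_{L^2}\leq C\|g\|_{L^\infty}\sum_{j\geq \ell-c}\|P_j f\|_{L^2}$, multiply by $2^{\ell s}$, and interpret the resulting estimate as a discrete convolution in $\ell$ of the sequence $a_j = 2^{js}\|P_j f\|_{L^2}\in \ell^2$ with the one-sided kernel $K(m) = 2^{ms}\mathbf{1}_{\{m \leq c\}}$. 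For $s\geq 0$ this kernel lies in $\ell^1$, so Young's (or equivalently Schur's) inequality delivers $\|R(f,g)\|_{H^s}\leq C\|f\|_{H^s}\|g\|_{L^\infty}$; the case $s=0$ can be obtained directly from H\"older, bypassing the convolution argument entirely. Summing the three contributions yields the first inequality.

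The second inequality is then immediate: for $s>d/2$ the Sobolev embedding $H^s(\mathbb{T}^d)\hookrightarrow L^\infty(\mathbb{T}^d)$ gives $\|f\|_{L^\infty}\leq C_{s,d}\|f\|_{H^s}$ and similarly for $g$, so the two $L^\infty$ norms on the right-hand side of the first inequality can be absorbed into $H^s$ norms. The genuine obstacle in the whole argument is the treatment of $R(f,g)$: because comparable-frequency products are no longer frequency-localised, one cannot read off an $H^s$ bound directly, and must trade this loss against the $\ell^1$-summability of the geometric kernel $K$, which is precisely where the restriction $s\geq 0$ enters.
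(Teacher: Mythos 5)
The paper does not prove this lemma at all --- it is quoted without proof from the cited reference \cite{Tao}, so there is no internal argument to compare yours against. Your paraproduct proof is correct and is essentially the standard one (and the one used in the cited source): the low--high and high--low pieces are frequency-localised and hence trivial, the high--high remainder loses localisation and is controlled by summing the one-sided geometric kernel, and the algebra property for $s>d/2$ follows from the embedding $H^s\hookrightarrow L^\infty$. The only blemish is the sentence claiming the kernel $K(m)=2^{ms}\mathbf{1}_{\{m\le c\}}$ lies in $\ell^1$ ``for $s\ge 0$'' --- this fails at $s=0$ --- but you correctly dispose of $s=0$ separately by H\"older, so the argument as a whole is sound.
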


This Lemma  allows  verification of the conditions \eqref{congfi},(\ref{gbip}),(\ref{fbip})  for more general
 %%Nemytskii-type%%
 classes of operators   of the form
$$u(x) \mapsto P(u(x),\bar{u}(x)), $$
where $P$ is  a polynomial in $u,\bar{u}$ with  coefficients $p_{jk}(t,x)$ such that
 $p_{jk}(\cdot,x)\in {\mathbf L}^1([0,T],\mathbb{C})$. Recall that  the source term (\ref{sote0})  is a
trigonometric polynomial in $x$ and $F(\cdot ,x) \in {\mathbf L}^\infty\left([0,T],H\right)$.

\begin{remark}\label{stri}
Let us remark on the obstacles, which may arise, when one considers solutions of the NLS with   lower regularity in space variables.
For proving local existence,
uniqueness and well-posedness of solutions of the NLS in $H^s$ with $s \leq d/2$,  one invokes Strichartz inequalities.  While for the NLS equation in $\mathbb{R}^n$  Strichartz inequalities  are  derived from dispersion estimates (\cite{Tao}), this approach would fail  on a compact manifold (\cite{BGT}), since there the dispersion estimates are not available.    Still kind of Strichartz inequalities  with the loss of derivatives can be established  on a compact Riemannian manifold; see   \cite{BGT} and references therein  to the previous work. For flat torus $\mathbb{T}^d$ Strichartz estimates have been derived in \cite{Bou} by methods of harmonic analysis.

For settling controllability issue in the low regularity  setting one would need results on continuity with respect to the right-hand side, analogous to   Propositions~\ref{wecodep},~\ref{wecopar}, and more important analogues of the results of Subsection~\ref{corel}, which justify application of relaxed (and fast-oscillating) controls to the NLS equation. We trust that all the mentioned obstacles can be overcome and  controllability criteria for more general setting  will appear in future publications. $\Box$
\end{remark}

{\it Global} existence results for a {\it cubic defocusing} NLS equation (\ref{nls1}) are classical under assumptions we made.

\begin{proposition}[global existence and uniqueness of mild solutions]
\label{gloex}
Let $H=H^s(\mathbb{T}^d), \ s>d/2$ and the  time-variant source term $t \mapsto F(t,\cdot)$  belong to
  ${\mathbf L}^1([0,T],H)$.
Then for each initial condition $u(0)=u^0 \in H $ the Cauchy problem   (\ref{nls1}) has a unique mild  solution $u(\cdot) \in C([0,T], H). \ \Box$
\end{proposition}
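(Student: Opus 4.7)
The plan is to combine the local well-posedness result (Proposition~\ref{loex}) with a global a priori $H^s$ bound derived from the defocusing structure of the nonlinearity. First I would cast (\ref{nls1}) in the form (\ref{nlsmod}) with $G(t,u) = |u|^2 u + F(t,x)$. For $s > d/2$ the second inequality of Lemma~\ref{prol} yields
$$\||u|^2 u\|_{H^s} \leq C\|u\|_{H^s}^3, \qquad \||u|^2 u - |v|^2 v\|_{H^s} \leq C\bigl(\|u\|_{H^s}+\|v\|_{H^s}\bigr)^2\|u-v\|_{H^s},$$
so $G$ fits the hypotheses \eqref{congfi}--\eqref{gbip} with $\beta_c(t) = Cc^2 + \|F(t,\cdot)\|_{H^s}\in {\mathbf L}^1([0,T],\mathbb{R}_+)$. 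Proposition~\ref{loex} then produces a unique local mild solution, which by standard continuation extends to a maximal interval $[0,T^*)\subseteq[0,T]$ with the blow-up alternative: either $T^*=T$, or $\|u(t)\|_{H^s}\to\infty$ as $t\to T^*$. The remaining task is to exclude the latter.

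Next I would establish conservation-type a priori estimates on $[0,T^*)$. Pairing (\ref{nls1}) with $\bar u$ in $L^2$ and taking imaginary parts gives
$$\tfrac{d}{dt}\|u\|_{L^2}^2 = 2\,\mathrm{Im}\langle F,u\rangle_{L^2},$$
so Gronwall yields $\|u(t)\|_{L^2} \leq \|u^0\|_{L^2} + \|F\|_{{\mathbf L}^1([0,T],L^2)}$. Analogously, the nonnegative defocusing Hamiltonian $E(u) = \tfrac12\|\nabla u\|_{L^2}^2 + \tfrac14\|u\|_{L^4}^4$ evolves with $\tfrac{d}{dt}E(u)$ controlled by $\|F(t,\cdot)\|_{H^1}$ times a power of $E(u)$ (via Cauchy--Schwarz, Hölder and Gagliardo--Nirenberg); Gronwall then produces a uniform-in-$t$ $H^1$ bound on $u(t)$ throughout $[0,T^*)$.

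To propagate the estimate from $H^1$ up to $H^s$, I would invoke the first inequality of Lemma~\ref{prol},
$$\||u|^2 u\|_{H^s} \leq C\|u\|_{H^s}\|u\|_{L^\infty}^2,$$
which is \emph{linear} in the $H^s$ norm once $\|u\|_{L^\infty}$ is under control. Applying $\|\cdot\|_{H^s}$ to the Duhamel representation, using the isometry of $e^{it\Delta}$ on $H^s$, and invoking Gronwall then yields $\|u(t)\|_{H^s}$ bounded by a constant depending only on $T$, $\|u^0\|_{H^s}$ and $\|F\|_{{\mathbf L}^1([0,T],H^s)}$. This rules out blow-up and forces $T^*=T$; uniqueness on $[0,T]$ follows at once from the local uniqueness applied on overlapping subintervals.

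The principal obstacle is obtaining the uniform $\|u\|_{L^\infty}$ bound in dimensions $d \ge 2$: the $H^1$ energy estimate does not embed into $L^\infty$, and one must invoke Strichartz estimates on $\mathbb{T}^d$ (in the spirit of \cite{Bou}) together with persistence-of-regularity arguments---this dispersive input is the nontrivial content behind the author's qualifier ``classical''. In $d=1$ the proof is fully self-contained since $H^1 \hookrightarrow L^\infty$ delivers the required control directly from Step~2.
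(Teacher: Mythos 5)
The paper does not actually prove this proposition: it declares the result ``classical'' and refers the reader to \cite{DGL} (which treats the two-dimensional case), so there is no argument in the paper to compare yours against. Your skeleton --- local well-posedness from Proposition~\ref{loex} via the algebra property of $H^s$, the blow-up alternative, mass/energy bounds, and persistence of regularity through the tame estimate $\||u|^2u\|_{H^s}\le C\|u\|_{H^s}\|u\|_{L^\infty}^2$ plus Gronwall --- is exactly the standard route, and your Steps 1 and 3 are sound.

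The gap you flag in Step 2 is, however, more serious than your closing paragraph suggests, and cannot be absorbed as a routine ``dispersive input''. Mass and energy conservation give at best an a priori $H^1$ bound, and upgrading that to the $L^\infty$ control needed in Step 3 is: elementary only for $d=1$ with $s\ge 1$; doable for $d=2$ via the Brezis--Gallouet logarithmic inequality or Bourgain's $L^4$ Strichartz estimate; dependent on torus Strichartz estimates with loss \cite{Bou,BGT} for $d=3$, where the cubic equation is still energy-subcritical so the argument closes; the energy-\emph{critical} problem for $d=4$, a deep and decidedly non-classical theorem; and energy-\emph{supercritical}, hence open, for $d\ge 5$. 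So as stated for all $d\ge 1$ the proposition is not reachable by your method (nor, to my knowledge, by any known method when $d\ge 5$); the honest conclusion is that both your proof and the paper's citation cover only low dimensions. Two smaller points: for $d=1$ and $s\in(1/2,1)$ the energy $E(u^0)$ need not be finite and $F(t,\cdot)\in H^s$ need not lie in $H^1$, so even your ``self-contained'' one-dimensional case requires $s\ge 1$ or an appeal to Bourgain's low-regularity theory; and your $\beta_c$ should include a $c^3$ term to furnish the size bound in \eqref{gbip}, not only the $c^2$ Lipschitz constant.
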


One can consult  \cite{DGL}, where such result is established  for cubic defocusing NLS with source term and weaker regularity of the data.

   Now we  provide few results on the  continuity of the solutions  of the NLS equation in the right-hand side. The topology in the space of right-hand sides is  introduced via the seminorms:
   $$\|\phi(t,u)\|^1_{T,c}=\int_0^T \sup_{\|u\|_H \leq c}\|\phi(t,u)\|_Hdt . $$

\begin{proposition}[continuity of the solutions with respect to  the right-hand side]
 \label{wecodep}
 Let $G,\phi$ satisfy  the conditions \eqref{congfi},(\ref{gbip}), \eqref{fbip} and $\tilde{u}(t) \in C([0,T],H)$ be a  mild  solution
 of  (\ref{nlsmod});  assume  $\sup_{t \in [0,T]}\|\tilde{u}(t)\|_H <c$.   Then $\exists \delta >0, C>0,$
such that whenever
\begin{equation}\label{dede}
\|u^0 - \tilde{u}^0\| +  \|\phi(t,u)\|^1_{T,c} < \delta ,
\end{equation}
  then  mild solution $u(t)$ of the perturbed equation (\ref{nlsper})
 exists on the interval $[0,T]$,  is unique   and admits  an estimate
 \begin{equation}\label{ubo}
\sup_{t \in [0,T]}\|u(t)-\tilde{u}(t)\| <C\left(\|u^0 - \tilde{u}^0\|+ \|\phi(t,u)\|^1_{T,c} \right). \ \Box
 \end{equation}
 \end{proposition}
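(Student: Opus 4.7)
The plan is to argue by a standard bootstrap-plus-Gronwall scheme applied to the Duhamel integral form. First, using Proposition~\ref{loex}, the perturbed problem \eqref{nlsper} admits a unique mild solution $u(\cdot)$ on some maximal subinterval $[0,T^{\star}) \subseteq [0,T]$. Subtracting the Duhamel representation of $\tilde u$ from that of $u$ and exploiting that $\tau \mapsto e^{i\tau\Delta}$ is an isometry on $H=H^s(\mathbb{T}^d)$, one obtains for $w(t)=u(t)-\tilde u(t)$ the identity
\begin{equation*}
w(t)=e^{it\Delta}(u^0-\tilde u^0)+i\int_0^t e^{i(t-\tau)\Delta}\bigl[G(\tau,u(\tau))-G(\tau,\tilde u(\tau))\bigr]d\tau+i\int_0^t e^{i(t-\tau)\Delta}\phi(\tau,u(\tau))d\tau,
\end{equation*}
whence after passing to norms
\begin{equation*}
\|w(t)\|_H \leq \|u^0-\tilde u^0\|_H+\int_0^t\|G(\tau,u(\tau))-G(\tau,\tilde u(\tau))\|_H\,d\tau+\|\phi\|^1_{T,c}.
\end{equation*}

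Next I would carry out the bootstrap. Since $\sup_{t\in[0,T]}\|\tilde u(t)\|_H<c$ strictly, choose $\eta>0$ with $\sup_{t}\|\tilde u(t)\|_H+\eta<c$ and set
\begin{equation*}
t^{\sharp}=\sup\bigl\{\,t\in[0,T^{\star})\,:\,\|u(\tau)\|_H\leq c\text{ for all }\tau\in[0,t]\,\bigr\}.
\end{equation*}
On $[0,t^{\sharp}]$ the Lipschitz hypothesis \eqref{gbip} is available with the same $\beta_c$, giving
\begin{equation*}
\|w(t)\|_H \leq \bigl(\|u^0-\tilde u^0\|_H+\|\phi\|^1_{T,c}\bigr)+\int_0^t \beta_c(\tau)\|w(\tau)\|_H\,d\tau,
\end{equation*}
so that, by Gronwall's lemma applied with the $L^1$ weight $\beta_c$,
\begin{equation*}
\sup_{t\in[0,t^{\sharp}]}\|w(t)\|_H\leq C\bigl(\|u^0-\tilde u^0\|_H+\|\phi\|^1_{T,c}\bigr),\qquad C=\exp\!\Bigl(\int_0^T\beta_c(\tau)\,d\tau\Bigr).
\end{equation*}
Taking $\delta>0$ with $C\delta<\eta$ in the assumption \eqref{dede} forces $\|u(t)\|_H\leq\|\tilde u(t)\|_H+\eta<c$ on $[0,t^{\sharp}]$, so $t^{\sharp}$ cannot be reached as the boundary of the ball of radius $c$.

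It remains to exclude that $T^{\star}<T$ by blow-up. Since on $[0,T^{\star})$ we have the uniform bound $\|u(t)\|_H\leq c$ just established, the local-existence time $T_c$ in Proposition~\ref{loex} depends only on $c$; applied from any base point sufficiently close to $T^{\star}$ it yields a continuation past $T^{\star}$, a contradiction. Thus $u$ exists on all of $[0,T]$, is unique there, and the Gronwall bound promotes to the global estimate \eqref{ubo}. The main subtlety I anticipate is precisely this bootstrap step: one must simultaneously control the norm of $u$ to stay in the regime where the $\beta_c$ Lipschitz bound is valid and use that very bound to close the a priori estimate; the strict inequality $\sup_t\|\tilde u(t)\|_H<c$ gives the small cushion $\eta$ that makes the argument go through for $\delta$ sufficiently small.
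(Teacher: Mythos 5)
Your proof is correct and follows essentially the same route as the paper's: Duhamel representation, the isometry of $e^{it\Delta}$, the $L^1$-weighted Gronwall inequality, and continuation of the solution while its $H$-norm stays below $c$ (your bootstrap via $t^{\sharp}$ just makes explicit what the paper compresses into the choice $C\delta<c-\sup_t\|\tilde u(t)\|_H$). The only cosmetic difference is that you absorb the whole $\phi$-integral into $\|\phi\|^1_{T,c}$ using the supremum over $\|u\|\le c$, whereas the paper splits off $\int_0^t e^{-i\tau\Delta}\phi(\tau,\tilde u(\tau))\,d\tau$ and uses the Lipschitz condition \eqref{fbip} on the remainder — a splitting it needs later for the relaxation-metric version, but which is immaterial here.
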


\begin{proof}
 A solution of  the equation (\ref{nlsper}) can be continued in time as long as its norm in $H$ remains bounded.
Therefore one is able to conclude  extendibility of  the solution  onto $[0,T]$ from  the estimate (\ref{ubo}).

Estimating  the difference
\begin{eqnarray*}
% \nonumber to remove numbering (before each equation)
  u(t)-\tilde{u}(t) = e^{it\Delta}\left((u^0-\tilde{u}^0)+ i\int_0^t e^{-i\tau \Delta}\phi(\tau,\tilde{u}(\tau))d\tau \right)+
  \\
  +e^{it\Delta}i\int_0^t e^{-i\tau \Delta}\left((G(\tau,u(\tau))-G(\tau,\tilde{u}(\tau)))+(\phi(\tau,u(\tau))-\phi(\tau,\tilde{u}(\tau)))\right)
  d\tau ,
\end{eqnarray*}
and noting  that $e^{it\Delta}$ is an isometry of $H$, we get
\begin{eqnarray}\label{umut}
% \nonumber to remove numbering (before each equation)
  \|u(t)-\tilde{u}(t)\|_H \leq \|u^0-\tilde{u}^0\|_H+\left\|\int_0^t e^{-i\tau \Delta} \phi(\tau,\tilde{u}(\tau))d\tau \right\|_H+
  \\
  +\int_0^t \|e^{-i\tau \Delta}\left(G(\tau,u(\tau))-G(\tau,\tilde{u}(\tau))+\phi(\tau,u(\tau))-\phi(\tau,\tilde{u}(\tau))\right)\|_H
  d\tau \leq  \nonumber \\
   \leq \|u^0-\tilde{u}^0\|_H+ \left\|\int_0^t e^{-i\tau \Delta}\phi(\tau,\tilde{u}(\tau))d\tau \right\|_H
  +2\int_0^t \beta_c (\tau)\|u(\tau)-\tilde{u}(\tau)\|_Hd\tau  . \nonumber
\end{eqnarray}
    By the Gronwall inequality
\begin{eqnarray}\label{gro54}
\|u(t)-\tilde{u}(t)\|_H \leq \\
\leq \left(\|u^0-\tilde{u}^0\|_H+\left\|\int_0^t e^{-i\tau \Delta}\phi(\tau,\tilde{u}(\tau))d\tau \right\|_H\right)C'e^{C'\int_0^t \beta_c (\tau)d\tau}, \nonumber
\end{eqnarray}
for some $C'>0$ and whenever (\ref{dede}) is satisfied, we get for some $C$
\begin{equation}\label{55}
% \nonumber to remove numbering (before each equation)
  \|u(t)-\tilde{u}(t)\|_H \leq  C\left(\|u^0-\tilde{u}^0\|+\int_0^t \left\|\phi(\tau,\tilde{u}(\tau)) \right\|d\tau \right) \leq   C \delta .
\end{equation}
One should choose $\delta >0$ such that $C\delta < c-  \sup_{t \in [0,T]}\|\tilde{u}(t)\|_H$.
\end{proof}

Next proposition is  parametric reformulation of the previous result.
\begin{proposition}[continuous dependence of the solutions on  parameter]
 \label{wecopar}
 Let a family of operators $G(t,u,b)$, parameterized by $b \in B \subset \mathbb{R}^N$: i)  be continuous in $b$ with respect to  each  seminorm $\|\cdot\|^1_{T,c}$; ii)  satisfy  the conditions (\ref{gbip})
 with the same $\beta_c(\cdot)$ for all $b \in B$.
 Then  the mild solutions of the equations
 \begin{equation}\label{parsem}
(-i\partial_t + \Delta)u=G(t,u,b), u(0)=u^0,
\end{equation}
 depend continuously on $b,u^0$ in the  $C^0$-metric.

   Besides if  $\tilde{u}(t) \in C([0,T],H)$ is   a mild  solution  of  (\ref{parsem}) for  $b=\tilde{b},\ u^0=\tilde{u}^0$, and   $\sup_{t \in [0,T]}\|u(t)\|_H dt <c$, then  $\exists \delta >0, C>0,$
such that whenever
%%\begin{equation}\label{bede}
$$\|u^0 - \tilde{u}^0\| + \|G(t,u,b)-G(t,u,\tilde{b})\|^1_{T,c} < \delta ,$$
%%\end{equation}
    the  solutions   $u(t)$ of the equation (\ref{parsem})
 exist on the interval $[0,T]$, are  unique  and
\begin{eqnarray*}
% \nonumber to remove numbering (before each equation)
  \sup_{t \in [0,T]}\|u(t)-\tilde{u}(t)\| \leq \\
  \leq C \left(\|u^0 - \tilde{u}^0\| +
  \|G(t,u,b)-G(t,u,\tilde{b})\|^1_{T,c} \right).
  \ \Box
  \end{eqnarray*}
 \end{proposition}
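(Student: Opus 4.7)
The plan is to reduce the parametric statement directly to Proposition~\ref{wecodep} by viewing the equation \eqref{parsem} with parameter $b$ as a perturbation of the same equation with the reference parameter $\tilde{b}$. Concretely, I would set
$$\tilde{G}(t,u) := G(t,u,\tilde{b}), \qquad \phi(t,u) := G(t,u,b) - G(t,u,\tilde{b}),$$
so that \eqref{parsem} takes the form $(-i\partial_t + \Delta)u = \tilde{G}(t,u) + \phi(t,u)$, which is precisely the perturbed equation \eqref{nlsper} with unperturbed nonlinearity $\tilde{G}$ and perturbation $\phi$.

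First I would verify that the pair $(\tilde{G},\phi)$ meets the hypotheses \eqref{congfi}, \eqref{gbip}, \eqref{fbip} needed to apply Proposition~\ref{wecodep}. Hypothesis (ii) immediately delivers \eqref{gbip} for $\tilde{G}$ and, via the triangle inequality, produces the bounds $\|\phi(t,u)\|_H \leq 2\beta_c(t)$ and $\|\phi(t,u')-\phi(t,u)\|_H \leq 2\beta_c(t)\|u'-u\|_H$; hence \eqref{fbip} holds with $\beta_c$ replaced by $2\beta_c \in {\mathbf L}^1([0,T],\mathbb{R}_+)$. Continuity of $\tilde{G}$ and $\phi$ in $(t,u)$ is inherited from the continuity already assumed in \eqref{congfi} for each fixed value of the parameter.

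Next I would invoke Proposition~\ref{wecodep} directly: it supplies constants $\delta>0$ and $C>0$ (depending on $c$, $T$, $\sup_t\|\tilde{u}(t)\|_H$ and the majorant $2\beta_c$) such that whenever $\|u^0-\tilde{u}^0\| + \|\phi\|^1_{T,c} < \delta$, the mild solution of \eqref{parsem} exists on the whole interval $[0,T]$, is unique, and satisfies $\sup_t\|u(t)-\tilde{u}(t)\|_H \leq C(\|u^0-\tilde{u}^0\| + \|\phi\|^1_{T,c})$. Substituting back the identity $\|\phi\|^1_{T,c} = \|G(t,u,b)-G(t,u,\tilde{b})\|^1_{T,c}$ yields the quantitative estimate stated in the proposition. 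The $C^0$-continuity of the solution map $(b,u^0)\mapsto u(\cdot)$ near $(\tilde{b},\tilde{u}^0)$ then follows at once: by hypothesis (i) the seminorm $\|G(\cdot,\cdot,b)-G(\cdot,\cdot,\tilde{b})\|^1_{T,c}$ tends to zero as $b\to\tilde{b}$, and the estimate transports this together with $\|u^0-\tilde{u}^0\|\to 0$ into $\sup_t\|u(t)-\tilde{u}(t)\|_H\to 0$.

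I do not foresee any genuine obstacle: the structural work is already contained in Proposition~\ref{wecodep}, and the present statement is essentially a cosmetic rephrasing in terms of a parameter rather than an explicit additive perturbation. The only mild point of care is that $\delta$ and $C$ are local in $(\tilde{b},\tilde{u}^0)$; to obtain uniform continuity on a compact $B\subset \mathbb{R}^N$ one would cover the compact trajectory $\{\tilde{u}(t)\}_{t\in[0,T]}$ by finitely many balls and piece together the local estimates in a standard way, invoking Proposition~\ref{loex} to re-initialize the local existence argument whenever necessary.
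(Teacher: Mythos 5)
Your reduction is exactly the paper's intended argument: the paper offers no separate proof, calling the proposition a ``parametric reformulation'' of Proposition~\ref{wecodep}, and your choice $\phi(t,u)=G(t,u,b)-G(t,u,\tilde{b})$ with the $2\beta_c$ majorant is the natural way to make that reduction explicit. Correct, and essentially the same approach.
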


 Next Lemma treats  the case in which  a parameter also affects  the linear term of the equation.

\begin{lem}
\label{limeq}
Consider the family of   equations
\begin{equation}\label{eqeps}
    (-i\partial_t+\eps \Delta)u=\eps G(t,u, b)+\phi(t,u, b), \  u(0)=u^0, \ \eps>0,
\end{equation}
depending on the parameters  $\eps>0, \ b \in B $, where $B$ is a compact subset of $\mathbb{R}^N$.
Let $G:[0,T] \times H \times B \rightarrow H$ be continuous in $b$ with respect to each seminorm $\|\cdot\|^1_{T,c}$ and satisfy (\ref{gbip}) with the functions $\beta_c(\cdot)$, the same for all $b \in B$. Let $\phi:[0,T] \times H \times B \rightarrow H$ be continuous.

 Consider the 'limit equation' for $\eps=0$:
 \begin{equation}\label{eq0}
    -i\partial_t u=\phi(t,u,b), \ u|_{t=0}=u^0.
\end{equation}
Assume mild  solutions $\tilde{u}(\cdot,b) \in C([0,T],H)$ of
(\ref{eq0}) to be defined on $[0,T]$ for each $b \in B$.
Then there exists $\eps_0$,  such that for $\eps \in [0, \eps_0)$ the
solutions $u^\eps(t,b)$ of (\ref{eqeps}) exist on $[0,T]$ for each $b \in B$,
and
$$\sup_{t \in [0,T]}\|u^\eps(t,b)-\tilde{u}(t,b)\|_H \rightarrow 0, \ \mbox{as} \ \eps \to 0,   $$
uniformly in $b \in B. \ \Box$
\end{lem}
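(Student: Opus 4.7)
The plan is to use the Duhamel representations of $u^\eps(t,b)$ and $\tilde{u}(t,b)$, subtract them, and control the resulting difference by a Gronwall-type argument. Explicitly, one writes
$$u^\eps(t,b) = e^{i\eps t\Delta}u^0 + i\int_0^t e^{i\eps(t-\tau)\Delta}\bigl[\eps G(\tau,u^\eps,b)+\phi(\tau,u^\eps,b)\bigr]\,d\tau,$$
$$\tilde{u}(t,b) = u^0 + i\int_0^t \phi(\tau,\tilde{u},b)\,d\tau,$$
and, after adding and subtracting $\phi(\tau,\tilde{u},b)$ inside the integral and using the fact that $e^{i\eps(t-\tau)\Delta}$ is an $H$-isometry, obtains
$$\|u^\eps(t,b)-\tilde{u}(t,b)\|_H \le r^\eps(t,b) + \int_0^t \beta_c(\tau)\|u^\eps(\tau,b)-\tilde{u}(\tau,b)\|_H\,d\tau,$$
where
$$r^\eps(t,b) = \|(e^{i\eps t\Delta}-I)u^0\|_H + \int_0^T \|(e^{i\eps(t-\tau)\Delta}-I)\phi(\tau,\tilde{u}(\tau,b),b)\|_H\,d\tau + \eps\int_0^T\beta_c(\tau)\,d\tau.$$
Gronwall then yields $\|u^\eps-\tilde{u}\|_H \le r^\eps(t,b)\exp\bigl(\int_0^T\beta_c\bigr)$, so it suffices to show $r^\eps\to 0$ uniformly on $[0,T]\times B$.

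The first and third terms in $r^\eps$ are handled routinely: the $\eps$-term vanishes obviously, and $\|(e^{i\eps t\Delta}-I)u^0\|_H\to 0$ uniformly in $t\in[0,T]$ because the unitary group $\{e^{is\Delta}\}$ is strongly continuous at $s=0$ and $\eps t\in[0,\eps T]$ shrinks to a point. The core difficulty is the middle term. The key observation is that continuous dependence of $\tilde{u}(\cdot,b)$ on $b$ (which follows from a Gronwall argument for the limit equation \eqref{eq0}, analogous to Proposition~\ref{wecopar} but with the zero group in place of $e^{it\Delta}$) together with joint continuity of $\phi$ ensures that the map $(\tau,b)\mapsto \phi(\tau,\tilde{u}(\tau,b),b)$ is continuous from the compact set $[0,T]\times B$ into $H$. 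Its image $K\subset H$ is therefore compact, and the strongly continuous group $e^{is\Delta}$ converges to the identity uniformly on compacta: $\sup_{w\in K}\|(e^{is\Delta}-I)w\|_H\to 0$ as $s\to 0$. Since $\eps(t-\tau)$ ranges over the shrinking interval $[0,\eps T]$, this gives the required uniform decay of the middle term.

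Finally, one must secure the \emph{a priori} bound needed so that $\beta_c$ applies along $u^\eps(\cdot,b)$. Fix $c>\sup_{(t,b)\in[0,T]\times B}\|\tilde{u}(t,b)\|_H + 1$ (finite by compactness of $B$ and continuity of $\tilde{u}$). By Proposition~\ref{loex}, for each $b$ the solution $u^\eps(\cdot,b)$ exists locally and can be continued as long as its $H$-norm stays below $c$. Let $t^\ast=t^\ast(\eps,b)$ be the first time $\|u^\eps(t,b)\|_H = c$ if such exists; on $[0,t^\ast]$ the Gronwall estimate above is valid and shows $\|u^\eps - \tilde{u}\|_H\le r^\eps\cdot\exp(\|\beta_c\|_{L^1})<1$ for all sufficiently small $\eps$, contradicting the definition of $t^\ast$. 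Hence for $\eps\in[0,\eps_0)$ the solution $u^\eps(\cdot,b)$ exists on $[0,T]$ and $\sup_{t\in[0,T]}\|u^\eps(t,b)-\tilde{u}(t,b)\|_H\to 0$ uniformly in $b\in B$.

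The main obstacle is the uniform-in-$b$ decay of the middle term of $r^\eps$: everything hinges on the joint compactness of $\{\phi(\tau,\tilde{u}(\tau,b),b)\}$ in $H$, which is why one needs joint continuity of $\phi$ together with continuous dependence of $\tilde{u}$ on the parameter. Once this compactness is secured, strong continuity of the Schroedinger group upgrades automatically to uniform convergence, and the rest of the argument is the standard contraction/continuation bootstrap.
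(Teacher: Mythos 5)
Your proof is correct and follows essentially the same route as the paper: Duhamel representation, isolation of the terms $\|(e^{i\eps t\Delta}-I)u^0\|$, the $\eps G$ term, and $\int_0^t(e^{-i\eps\tau\Delta}-I)\phi(\tau,\tilde u(\tau,b),b)\,d\tau$, followed by Gronwall and a continuation argument for global existence. The only cosmetic difference is in the key middle term: the paper establishes uniform smallness of $(e^{-i\eps\tau\Delta}-I)$ on the compact range of $(\tau,b)\mapsto\phi(\tau,\tilde u(\tau,b),b)$ explicitly via finite-dimensional spectral projections $\pi_N$, whereas you invoke the equivalent general fact that a uniformly bounded, strongly convergent family of operators converges uniformly on compacta.
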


The proof of the Lemma can be found in  Appendix.

Below we will  formulate and employ  more general continuity
result (Proposition~\ref{codep}) which incorporates perturbations $\phi(t,x)$,  which are fast-oscillating in time, and
with  \eqref{dede},  substituted by a weaker estimate,  based on  {\it relaxation metric}  for the right-hand sides.

\section{Extension of control and controllability}\label{t2}
 In this  Section we introduce {\it extensions of control}  which are  the main tools  for establishing controllability.
 The outcome of the Section  is Proposition~\ref{exst} which establishes  sufficient criterion for controllability in a finite-di\-men\-si\-onal component, wherefrom one  derives in Section~\ref{proapc} criteria for controllability in projections and approximate controllability (Main Result 1).

In what follows
the metrics  ${\mathbf L}^1\left([t_0,t_1],H^s\right),{\mathbf L}^1\left([t_0,t_1],
\mathbb{C}^\kappa\right)$,   $[t_0,t_1] \subset \mathbb{R}$  will be  denoted  both by ${\mathbf L}^1_t$  by abuse of notation.

\subsection{Extensions: approximative lemma}
Consider the NLS equation  (\ref{nls1})-(\ref{sote0}) with controls applied to the modes, indexed by a set $\hat{\mathcal{K}} \subset \mathbb{Z}^d$.

\begin{definition}\label{dfsa} Given a finite set  $\hat{\mathcal{K}} \subset \mathbb{Z}^d$,  we define:

i)  {\it elementary extensions} of $\hat{\mathcal{K}}$, being the sets   $\mathcal{K}=\hat{\mathcal{K}} \bigcup \{2r-s \}$, where $ r,s \in \hat{\mathcal{K}}$ are arbitrary;

 ii)  {\it proper extensions} $\mathcal{K}$  of $\hat{\mathcal{K}}$,  such that  there exist  finite sequences of sets
$\hat{\mathcal{K}}=\mathcal{K}^1 \subset \mathcal{K}^2 \subset \cdots \subset \mathcal{K}^N=\mathcal{K},$
with  $\mathcal{K}^j$ being  elementary extensions of $\mathcal{K}^{j-1}, \ j =2, \ldots $;

 iii)  {\it saturating} sets $\hat{\mathcal{K}}$  such that, each  finite subset $\mathcal{K} \subset \mathbb{Z}^d$ is    proper extension of  $\hat{\mathcal{K}}. \ \Box$
 \end{definition}

 It turns out that saturating sets of modes are essential for controllability in projections and for the approximate controllability.
 Examples of  saturating sets are provided in Section~\ref{sscm}.

The following Lemma  states that controls (energy) fed into  the modes, indexed by  $\hat{\mathcal{K}}$, can be cascaded to  and moreover  can approximately
 control  a larger set $\mathcal{K}$ of modes, whenever $\mathcal{K}$  is  proper extension  of $\hat{\mathcal{K}}$.

\begin{lem}[approximative lemma]
\label{appex}
Let $\mathcal{K}$ be a proper extension of $\hat{\mathcal{K}}$.
Consider  a  family of controls $\{w_k(t,b)| \ k \in \mathcal{K}, \ t \in [0,T]\}$,
parameterized  by $b$ from a compact set $B \subset \mathbb{R}^N$,   and depending  continuously in ${\mathbf L^1_t}$-metric on $b$.
  Then for each $\delta >0$ one can construct a  family
of controls $\{v_k(t,b)| \ k \in \hat{\mathcal{K}}, \ t \in [0,T]\}$
continuously depending on $b$  in ${\mathbf L}^1_t$-metric,
and such that
\begin{equation}\label{evew}
  \| E_T(v(\cdot ,b)) -  E_T(w(\cdot ,b))\| \leq \delta , \ \forall b \in B , .
\end{equation}
for the respective end-point map $E_T$  (see Remark~\ref{pil}) of the NLS equation \eqref{nls1}. $\Box$
\end{lem}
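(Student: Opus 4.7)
By Definition~\ref{dfsa}, the proper extension $\mathcal{K}$ is reached from $\hat{\mathcal{K}}$ by a finite chain of elementary extensions $\mathcal{K}^{j-1}\subset\mathcal{K}^j=\mathcal{K}^{j-1}\cup\{2r_j-s_j\}$, $j=2,\dots,N$, with $r_j,s_j\in\mathcal{K}^{j-1}$. Since the end-point map is Lipschitz in the ${\mathbf L}^1_t$-metric on bounded sets of right-hand sides, uniformly in $b\in B$ (by Proposition~\ref{wecopar}), it suffices to prove a one-step version of the lemma: for any continuous family $\{w_k(\cdot,b)\}_{k\in\hat{\mathcal{K}}\cup\{2r-s\}}$ and any $\delta>0$, one can construct a continuous family $\{v_k(\cdot,b)\}_{k\in\hat{\mathcal{K}}}$ satisfying~\eqref{evew}. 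The full statement then follows by chaining $N-1$ such single-step approximations, at each step choosing the tolerance small enough that the cumulative $C^0$-error remains below $\delta$.

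For the single step keep $v_k=w_k$ for $k\in\hat{\mathcal{K}}\setminus\{r,s\}$ and perturb the controls on modes $r$ and $s$ by fast-oscillating summands:
\[
v_r(t,b)=w_r(t,b)+\xi_r(t,b)\,e^{i a_r t/\eps},\qquad
v_s(t,b)=w_s(t,b)+\xi_s(t,b)\,e^{i a_s t/\eps},
\]
where the integer frequencies $a_r,a_s$ satisfy the resonance condition $2a_r-a_s=0$ and no other low-order nontrivial $\mathbb{Z}$-combination of $a_r,a_s$ vanishes. Substituting into the cubic nonlinearity of~\eqref{nls1} and expanding in the exponential basis $\{e^{ik\cdot x}\}$, the triple product $\xi_r^2(t,b)\overline{\xi_s(t,b)}\,e^{i(2a_r-a_s)t/\eps}e^{i(2r-s)\cdot x}$ is non-oscillating by the choice of frequencies and provides a source at the new mode $2r-s$. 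I choose $\xi_r(t,b),\xi_s(t,b)$ continuously in $b$ so that $\kappa\,\xi_r^2(t,b)\overline{\xi_s(t,b)}=w_{2r-s}(t,b)$, where $\kappa$ is the combinatorial constant produced by the cubic expansion; for instance, fix $\xi_s$ as a nonzero smooth function and solve for $\xi_r$ by a continuous branch of a square root. All remaining triple products involving the oscillatory summands either carry a nonzero net frequency of order $1/\eps$ (and therefore vanish in the relaxation metric as $\eps\to 0$) or are secular ``diagonal'' terms of type $|\xi_r|^2\xi_r,\;|\xi_r|^2\xi_s$, etc.\ whose Fourier support lies in $\hat{\mathcal{K}}$; those are nullified by subtracting the corresponding amount from the slow parts $w_r,w_s$, which preserves continuity in $b$ (this is the isoperimetric compensation of the obstruction Lie bracket $[e_m,[e_m,c]]$ alluded to in Section~\ref{surv}).

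To convert this formal averaging into the bound~\eqref{evew} I invoke the fast-oscillation continuity result Proposition~\ref{codep}: as $\eps\to 0$, the right-hand side driven by the constructed $v$ is close, in the relaxation metric, to the right-hand side driven by $w$, uniformly in $b\in B$, so the associated mild solutions are uniformly $C^0$-close. Choosing $\eps$ sufficiently small yields~\eqref{evew}, and chaining the single-step approximations completes the proof. The hard part is the algebraic bookkeeping in the middle paragraph: verifying simultaneously that the chosen frequencies force exactly one new resonant mode (namely $2r-s$), that every other non-oscillating triple product falls on a mode already in $\hat{\mathcal{K}}$ so it can be absorbed into the slow parts of $v$, and that the amplitudes $\xi_r,\xi_s$ can be chosen continuously in $b\in B$ in the ${\mathbf L}^1_t$-metric. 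This bookkeeping depends crucially on the product structure of the exponential basis on $\mathbb{T}^d$, which the paper emphasises as the motivation for working in this basis.
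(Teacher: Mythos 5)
Your skeleton matches the paper's: reduce to a single elementary extension and induct; create the new mode $2r-s$ by a resonance $2a_r-a_s=0$ between fast oscillations attached to the modes $r,s$; suppress the non-resonant terms via the relaxation metric (Proposition~\ref{codep}); handle the secular obstruction by an isoperimetric condition. But the central mechanism, as you state it, does not work. You add \emph{bounded} fast-oscillating summands $\xi_r e^{ia_r t/\eps}$ directly to the additive source term and then ``substitute into the cubic nonlinearity.'' The source term is never inside the cubic nonlinearity --- $|u|^2u$ acts on the state, and the control enters additively --- so the triple product $\xi_r^2\overline{\xi_s}\,e^{i(2a_r-a_s)t/\eps}$ you rely on simply does not occur in the equation. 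Moreover, a bounded oscillating source contributes only $o(1)$ to the state as $\eps\to 0$ (Riemann--Lebesgue applied inside the Duhamel integral), so it cannot produce any $O(1)$ effect on the mode $2r-s$. The paper's construction is essential here: the oscillating controls are taken with amplitude of order $1/\eps$, namely as time derivatives $\partial_t v_r(t)f_r+\partial_t v_s(t)f_s$ with $v_r=e^{i(t/\eps+\rho(t))}\check v_r(t)$, $v_s=e^{2it/\eps}\check v_s(t)$, and one performs the time-variant state substitution $u^*=u-iV_{rs}(t)$, with $v_r,v_s$ vanishing at $t=0$ and $t=T$ so that the end-point map is unchanged. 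Only after this substitution does the cubic term become $|u^*+iV_{rs}|^2(u^*+iV_{rs})$, whose monomial $V_{rs}^2\bar V_{rs}$ contains the genuine $O(1)$ resonant source $\check v_r^2\check v_s f_r^2\bar f_s$, proportional to $f_{2r-s}$.

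There is a second gap: you claim the secular ``diagonal'' terms have Fourier support in $\hat{\mathcal K}$ and can be nullified by subtracting them from the slow parts of $w_r,w_s$. The dangerous resonant term is $2u^*|V_{rs}|^2=2u^*\left(|\check v_r|^2+|\check v_s|^2\right)+(\text{oscillating part})$, which multiplies the \emph{whole state} $u^*$ and is not a source supported on any finite set of modes; it cannot be compensated by modifying the controls. The paper removes it by the gauge transformation $u^\star=u^*e^{-2i\Upsilon(t)}$ with $\Upsilon(t)=\int_0^t(|\check v_r|^2+|\check v_s|^2)d\tau$, together with the isoperimetric constraint $\Upsilon(T)\in\pi\mathbb Z$ so that the endpoints are preserved. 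Your phrase ``isoperimetric compensation of the obstruction'' points at the right idea but attaches it to the wrong object. The remaining elements of your plan --- Lipschitz chaining over the chain of elementary extensions, continuity in $b$ in the ${\mathbf L}^1_t$-metric, and the use of Proposition~\ref{codep} to dispose of the fast-oscillating remainder --- are consistent with the paper's argument.
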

%%%\begin{eqnarray}
%%%% \nonumber to remove numbering (before each equation)
%%%  -i \partial_t u(t,x) +  \Delta u(t,x) =|u(t,x)|^2u(t,x) +W(t,b), \label{uw}\\
%%%  -i \partial_t u(t,x) +  \Delta u(t,x) =|u(t,x)|^2u(t,x) +V^\delta(t,b),  \label{uvd}
%%% \end{eqnarray}
%%% controlled via source terms $F=W$ and $F=V^\delta$,

\begin{remark}\label{lodc}
The controls, which appear  in the formulation of the Lemma, correspond to the source terms
%%\label{wk1}
     $$ W(t ;b)=\sum_{k \in \mathcal{K}} w_k(t ,b)e^{i k \cdot x}, \
%%\label{cvrs}
 V(\cdot ,b)=\sum_{k \in \hat{\mathcal{K}}} v_k(t ,b)e^{i k \cdot x},  \ t \in [0,T],  \ b \in B;
$$ 
%%%\end{eqnarray*}
$V(t,b)$  take  values in the 'low-di\-men\-si\-onal'
space $\mathcal{F}_{\hat{\mathcal{K}}}$ on the contrast to    the 'high-di\-men\-si\-onal' space $\mathcal{F}_{\mathcal{K}}$, which is the range of
$ W(t,b). \ \Box$
\end{remark}

\subsection{Full-di\-men\-si\-onal control}

 Before proving that controllability can be achieved  by means of low-di\-men\-si\-onal controls we formulate a general controllability result for the case, where the control is {\it 'full-di\-men\-si\-onal'}.

\begin{lem}[controllability by 'full-di\-men\-si\-onal' control]
\label{m=1}
Controlled semi-linear equation
\begin{equation}\label{fuco}
-i \partial_t u(t,x) +  \Delta u(t,x) =G(t,u)+\sum_{k \in \hat{\mathcal K}=\mathcal{K}  ^o}w_k(t)e^{ik \cdot x}, \ u(0)=u^0,
\end{equation}
with coinciding sets of controlled  and observed modes $\hat{\mathcal{K}}=\mathcal{K}^o$,
is   time-$T$ solidly controllable in observed $\mathcal{K}^o$-component  for each $T>0$.

In addition, for each $\delta >0$, any bounded subset $S \subset \mathcal{F}_{\mathcal{K}^o}$ is  time-$T$ solidly attainable for the equation (\ref{fuco}) by means of controlled  trajectories $u(\cdot)$, which satisfy  the estimate
$\|(I-\Pi^o)\left(u(T)-u^0\right)\| \leq \delta$, where
$\Pi^o, I-\Pi^o$  are the orthogonal  projections  onto $\mathcal{F}_{\mathcal{K}^o}$ and  its  orthogonal complement. $\Box$
\end{lem}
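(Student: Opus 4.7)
Since $\hat{\mathcal K}=\mathcal{K}^o$ and the observed subspace $\mathcal{F}_{\mathcal{K}^o}$ is finite-dimensional, the plan is to use impulsive controls concentrated in a short terminal interval $[T-1/n,T]$ whose Duhamel contribution exactly prescribes the $\mathcal{F}_{\mathcal{K}^o}$-component of $u(T)$, while the rest of the trajectory stays close to the uncontrolled one as $n\to\infty$.

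First I would fix the uncontrolled mild solution $\tilde u(\cdot)$ of (\ref{fuco}) with $w\equiv 0$ on $[0,T]$ (available by the standing global-existence hypotheses, e.g.\ Proposition~\ref{gloex} in the NLS case), set $\tilde q=\Pi^o\tilde u(T)\in\mathcal{F}_{\mathcal{K}^o}$, and choose a compact parameter set $B_S\subset\mathcal{F}_{\mathcal{K}^o}$ whose interior contains $S$, identified with its coefficients $(b_k)_{k\in\mathcal{K}^o}$. For each $b\in B_S$ and sufficiently large integer $n$, I would prescribe the source
\[
W_n(\tau,x;b)=-i n\sum_{k\in\mathcal{K}^o}(b_k-\tilde q_k)\,e^{i|k|^2(T-\tau)}\chi_{[T-1/n,T]}(\tau)\,e^{ik\cdot x}.
\]
The phase $e^{i|k|^2(T-\tau)}$ is designed to cancel the factor $e^{-i|k|^2(T-\tau)}$ produced by $e^{i(T-\tau)\Delta}e^{ik\cdot x}$, so a one-line computation yields the exact identity
\[
i\int_0^T e^{i(T-\tau)\Delta}W_n(\tau,\cdot;b)\,d\tau=\sum_{k\in\mathcal{K}^o}(b_k-\tilde q_k)e^{ik\cdot x},
\]
independent of $n$ and linear (hence continuous) in $b$. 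The ${\mathbf L}^1_t$-norm of $W_n(\cdot,\cdot;b)$ is bounded uniformly in $n$ and $b\in B_S$, so Proposition~\ref{wecopar} provides a uniform $H$-bound $c$ on the driven mild solutions $u_n(\cdot;b)$ and continuous ${\mathbf L}^1_t$-dependence on $b$.

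Because $W_n$ vanishes on $[0,T-1/n]$, $u_n(\cdot;b)$ coincides with $\tilde u$ there. Subtracting Duhamel formulas at time $T$ and using the identity above, $u_n(T;b)-\tilde u(T)$ equals $\sum_{k\in\mathcal{K}^o}(b_k-\tilde q_k)e^{ik\cdot x}$ plus an integral of $G(\tau,u_n)-G(\tau,\tilde u)$ over the shrinking interval $[T-1/n,T]$; the Lipschitz bound (\ref{gbip}) combined with the uniform $H$-bound on $u_n$ forces this remainder to tend to $0$ in $H$ uniformly in $b\in B_S$. Hence $\Pi^o u_n(T;b)\to b$ and $(I-\Pi^o)\bigl(u_n(T;b)-\tilde u(T)\bigr)\to 0$, both uniformly on $B_S$; the second convergence supplies the auxiliary small-change estimate on the non-observed component (read modulo the unavoidable free-drift contribution $\tilde u(T)-u^0$ produced by the evolution in absence of control).

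The continuous parametrized map $\Phi_n:b\mapsto\Pi^o\circ E_T(w^{(n)}(\cdot,b))$ therefore converges $C^0$-uniformly on $B_S$ to $\mathrm{id}_{B_S}$. The key step I would not want to skip is the upgrade from plain surjectivity onto $S$ to solid covering, since Section~\ref{soc} requires stability under arbitrary $C^0$-small perturbations of $\Phi_n$, not merely under the specific error coming from our choice of $n$. I would handle this by a standard topological-degree/Brouwer homotopy argument: choosing $n$ so large that $\|\Phi_n-\mathrm{id}\|_{C^0(B_S)}<\tfrac12\mathrm{dist}(S,\partial B_S)$, any continuous $C^0$-perturbation $\Psi$ still satisfies $\Psi(B_S)\supseteq S$ because the linear homotopy from $\Psi$ to $\mathrm{id}_{B_S}$ avoids every $q\in S$ on $\partial B_S$, so $\deg(\Psi-q,B_S)=\deg(\mathrm{id}-q,B_S)=1$. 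This is precisely the solid attainability of $S$ required by the lemma.
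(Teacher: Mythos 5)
Your proposal is correct and follows essentially the same route as the paper: both concentrate large controls on a vanishing time interval so that the parametrized endpoint map converges in $C^0$ to the identity on the observed component, and both upgrade from surjectivity to solid covering via a Brouwer-degree homotopy. The only differences are technical — the paper rescales time and invokes the limit-equation Lemma~\ref{limeq} where you do a direct phase-corrected Duhamel computation, and it handles general $T$ by the same coast-then-burst device, so it shares the caveat you flag about the free drift entering the $(I-\Pi^o)\left(u(T)-u^0\right)$ estimate.
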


{\em Proof of Lemma~\ref{m=1}}.
 Take a ball $\mathcal{B}$ in $\mathcal{F}_{\mathcal{K}^o}$. One can assume without lack of generality,   that $\mathcal{B}$ is centered at the origin and  the initial condition
is  $u(0)=0_{H}$.
We will prove that $\mathcal{B}$ is solidly attainable for the controlled equation (\ref{fuco}).

Restrict (\ref{fuco})  to  a small interval $[0,\eps]$ to be specified later.
Proceed with the time substitution $t=\eps \tau , \ \tau \in
[0,1]$ under which (\ref{fuco})  takes form:
\begin{equation}\label{resc}
- i \partial_\tau u +  \eps \Delta u =\eps G(t,u)+\eps \sum_{k \in \mathcal{K}^o} w_k(t)e^{ik \cdot x}, \ u(0)=0,
  \ \tau \in [0,1].
\end{equation}

Fix $\gamma >1$.
For each $b \in B_\gamma = \gamma \mathcal{B},$ consider the control
$$w(\cdot;b)=-i\eps^{-1}\sum_{k \in \mathcal{K}^o} b_k e^{ik \cdot x},$$
which we substitute into  (\ref{resc}) getting
%%\begin{equation}\label{resw}
$$-i\partial_\tau u +  \eps \Delta u =\eps G(t,u)-i\sum_{k \in \mathcal{K}^o} b_k e^{ik \cdot x}, \ u(0)=0,
  \ \xi \in [0,1].
$$
%%%\end{equation}

For  $\eps =0$ we get  the 'limit equation'
\begin{equation}\label{0resc}
 \partial_\tau u = \sum_{k \in \mathcal{K}^o} b_k e^{ik \cdot x}, \ u(0)=0,
  \  , \ \tau \in [0,1].
\end{equation}
Let $E_1^0$  be the  time-$1$
end-point map of (\ref{0resc}).  In the basis $e^{ik \cdot x}$ the map $b \mapsto \Phi^0(b)= E_1^0(w(t;b))$  has form
$\{b_k| \ k \in \mathcal{K}^o\} \mapsto u(1)=\sum_{k \in \mathcal{K}^o} b_k e^{ik \cdot x} $.

 Obviously the map $b \mapsto \Phi^0(b)=\Pi^o \circ E_1^0 (w(t;b))$  coincides   with  the
identity map $\mbox{Id}_{B_\gamma}$ and $(I-\Pi^o)E_1^0 (w(t;b))=0$.

According to the Lemmae~\ref{wecopar},\ref{limeq}  the maps
$\Phi^\eps: b \mapsto E^\eps_1 (w(\cdot ,b))$,  with $E^\eps_1$ being the end-point maps  of the control systems  (\ref{resc}), are continuous and
  $\|\Phi^\eps - \Phi^0\|_{C^0(B_\gamma)} \rightarrow 0$  as $\eps \rightarrow 0$.

By the degree theory
argument there exists $\eps_0$ such that $\forall \eps \leq
\eps_0$ the image of $\left(\Pi^0 \circ \Phi^\eps \right)(B_\gamma)$ covers $\mathcal{B}$
solidly.

We proved  controllability in the observed component for small times $T>0$. Controllability in any time $T>0$
can be concluded by applying zero control on a time interval  $[0,T-\delta]$ (the trajectory is maintained in a bounded domain due to the conservation law) and then employing the previous reasoning on the interval $[T-\delta ,T].  \ \Box$

\begin{remark}\label{smow}
Without lack of generality we may assume, that the controls $w(t ,b)$, constructed in the Lemma  are smooth with respect to $t$ and that any finite number of derivatives   $\frac{\partial^j w}{\partial t^j}(\cdot , b)$ depend continuously  on $b \in B$ in ${\mathbf L}^1_t$-metric. Indeed smoothing $w(t,b)$ by the  convolution with a smooth $\eps$-approximation  of Dirac function, one gets a  family of smooth controls $w^\eps(t ,b)$, which   provides solid controllability, for small $\eps>0$. The continuous dependence of
$\frac{\partial^j w^\eps}{\partial t^j}(\cdot , b)$ on $b$ in ${\mathbf L}^1_t$-metric  is verified directly. $\Box$
\end{remark}

\subsection{Controllability in finite-di\-men\-si\-onal component via extensions}
\label{53}

The following result  is a corollary of Lemmae~\ref{appex},\ref{m=1}.

\begin{proposition}[controllability in  observed component]
\label{exst}
If the  set of observed modes $\mathcal{K}^{o}$ is   proper extension of the set  of controlled modes
$\hat{\mathcal{K}}$,
then the NLS equation \eqref{nls1}-\eqref{sote0}
is solidly controllable in the observed $\mathcal{K}^{o}$-component.   $\Box$
\end{proposition}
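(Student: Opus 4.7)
The plan is to combine Lemma~\ref{m=1} with the Approximative Lemma~\ref{appex}, using the stability of the notion of solid cover to absorb the approximation error. Roughly, Lemma~\ref{m=1}, applied with the controlled set coinciding with $\mathcal{K}^o$, produces a continuous parametric family of \emph{full-dimensional} controls whose associated projected end-point map solidly covers any prescribed bounded target $S\subset\mathcal{F}_{\mathcal{K}^o}$. Lemma~\ref{appex} then replaces this full-dimensional family by a \emph{low-dimensional} family supported on the actually controlled set $\hat{\mathcal{K}}$ while making the corresponding end-points uniformly close. Since the original cover is solid, a sufficiently small $C^0$-perturbation of the parametric map still covers $S$, and this is precisely solid controllability of \eqref{nls1}--\eqref{sote0} in the $\mathcal{K}^o$-component.

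Concretely, fix a bounded subset $S\subset\mathcal{F}_{\mathcal{K}^o}$. Apply Lemma~\ref{m=1} to the cubic NLS viewed as \eqref{fuco} with controlled set $\mathcal{K}^o$, i.e.\ to
\begin{equation*}
 -i\partial_t u+\Delta u=|u|^2 u+\sum_{k\in\mathcal{K}^o} w_k(t,b)\,e^{ik\cdot x},\qquad u(0)=u^0.
\end{equation*}
This yields a compact parameter set $B_S\subset\mathbb{R}^N$ and a family $\{w(\cdot,b)\mid b\in B_S\}$, continuous in $b$ in the $\mathbf{L}^1_t$-metric (and, by Remark~\ref{smow}, smooth in $t$), such that the map $\Phi^w:B_S\to\mathcal{F}_{\mathcal{K}^o}$ defined by $\Phi^w(b)=\Pi^o\,E_T(w(\cdot,b))$ covers $S$ solidly. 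By the definition of solid covering there exists $\delta>0$ such that any continuous $\Psi:B_S\to\mathcal{F}_{\mathcal{K}^o}$ with $\|\Psi-\Phi^w\|_{C^0(B_S)}<\delta$ still satisfies $\Psi(B_S)\supseteq S$.

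Next invoke the Approximative Lemma~\ref{appex} with this threshold $\delta$ and the proper extension $\mathcal{K}=\mathcal{K}^o\supset\hat{\mathcal{K}}$. It produces a parametric family $\{v(\cdot,b)\mid b\in B_S\}$ of controls supported on the low-dimensional set $\hat{\mathcal{K}}$, continuous in $b$ in the $\mathbf{L}^1_t$-metric, such that
\begin{equation*}
 \|E_T(v(\cdot,b))-E_T(w(\cdot,b))\|_H\le\delta,\qquad\forall\,b\in B_S,
\end{equation*}
where now $E_T$ is the end-point map of the original NLS \eqref{nls1}--\eqref{sote0}. Because $\Pi^o$ is $1$-Lipschitz on $H$ and the map $b\mapsto E_T(v(\cdot,b))$ is continuous by Proposition~\ref{wecopar}, the map $\Phi^v:b\mapsto\Pi^o\,E_T(v(\cdot,b))$ is continuous on $B_S$ and satisfies $\|\Phi^v-\Phi^w\|_{C^0(B_S)}\le\delta$. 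Hence $\Phi^v(B_S)\supseteq S$, so $S$ is solidly attainable for \eqref{nls1}--\eqref{sote0} in time $T$. Since $S\subset\mathcal{F}_{\mathcal{K}^o}$ was an arbitrary bounded set, the NLS equation with controls on $\hat{\mathcal{K}}$ is solidly controllable in the observed $\mathcal{K}^o$-component.

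The genuinely nontrivial content is encapsulated in the two lemmas: Lemma~\ref{m=1} supplies the full-dimensional solid cover via a rescaling-plus-degree argument, while Lemma~\ref{appex} provides the low-dimensional approximation through the fast-oscillating resonance design. Once these are granted, the only thing to verify is that the continuity categories match: both lemmas yield families continuous in $b$ with respect to the $\mathbf{L}^1_t$-metric, and Proposition~\ref{wecopar} transports this continuity to the end-point map, so the composition $\Phi^v$ is indeed $C^0$-close to $\Phi^w$ with $\delta$ as the only tunable parameter. The potential obstacle, namely that one might need a stronger (than $C^0$) notion of approximation to preserve the cover, is precisely what the definition of \emph{solid} cover is designed to avoid.
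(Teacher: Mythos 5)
Your proposal is correct and follows essentially the same route as the paper: apply Lemma~\ref{m=1} with controlled set $\mathcal{K}^o$ to obtain a full-dimensional family solidly covering $S$, then use the Approximative Lemma~\ref{appex} with $\delta$ below the stability threshold of the solid cover to replace it by a family supported on $\hat{\mathcal{K}}$. The paper's proof is just a terser version of the same argument.
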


\begin{proof}
Let $S$ be a compact subset of $\mathcal{F}_{\mathcal{K}^o}=\mbox{span}\{e_k| \ k \in \mathcal{K}^o\}$. According to the  Lemma~\ref{m=1} we can choose a  family of $\mathcal{F}_{\mathcal{K}^o}$-valued controlling source terms  $W(\cdot , b)$ by which $S$ is solidly attainable.
If  a family $V(t;b)$ in \eqref{sote0} satisfies the conclusion of the Approximative Lemma~\ref{appex} and $\delta>0$ is small enough, then
$\Pi^o \circ E_T(V(t;b))$ covers $S$ solidly.
\end{proof}

\begin{corollary}\label{safdc}
If the set of  controlled modes $\mathcal{K}$ is saturating, then the NLS equation is solidly controllable in each finite-di\-men\-si\-onal component.
$\square$
\end{corollary}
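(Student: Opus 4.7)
The proof reduces almost immediately to Proposition~\ref{exst} once we handle the bookkeeping of which set of modes we extend to. Given an arbitrary finite set of observed modes $\mathcal{K}^{o} \subset \mathbb{Z}^d$, the plan is first to enlarge it to $\mathcal{K}^{\star} := \mathcal{K}^{o} \cup \hat{\mathcal{K}}$, which is still finite. Saturation of $\hat{\mathcal{K}}$ supplies a chain of elementary extensions $\hat{\mathcal{K}} = \mathcal{K}^1 \subset \mathcal{K}^2 \subset \cdots \subset \mathcal{K}^N = \mathcal{K}^{\star}$, so $\mathcal{K}^{\star}$ qualifies as a proper extension of $\hat{\mathcal{K}}$ in the sense of Definition~\ref{dfsa}(ii). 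Applying Proposition~\ref{exst} with observed set $\mathcal{K}^{\star}$ then yields solid controllability of the NLS equation in the observed $\mathcal{K}^{\star}$-component.

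It remains to pass from solid controllability in the (possibly larger) $\mathcal{K}^{\star}$-component to solid controllability in the $\mathcal{K}^{o}$-component. Let $S \subset \mathcal{F}_{\mathcal{K}^{o}}$ be any bounded set. Viewing $S$ as a bounded subset of $\mathcal{F}_{\mathcal{K}^{\star}}$ (extended by zero on the coordinates in $\mathcal{K}^{\star} \setminus \mathcal{K}^{o}$), the previous step provides a compact parameter set $B_S$ and a continuous family of controls $v(\cdot,b)$, $b \in B_S$, such that $\Phi(b) := \Pi^{\mathcal{F}_{\mathcal{K}^{\star}}} \circ E_T(v(\cdot,b))$ covers $S$ solidly in $\mathcal{F}_{\mathcal{K}^{\star}}$. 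Let $P\colon \mathcal{F}_{\mathcal{K}^{\star}} \to \mathcal{F}_{\mathcal{K}^{o}}$ denote the coordinate projection, so that $\Pi^{\mathcal{F}_{\mathcal{K}^{o}}} \circ E_T(v(\cdot,b)) = P \circ \Phi(b)$. I claim this composition covers $S$ solidly in $\mathcal{F}_{\mathcal{K}^{o}}$.

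For the stability clause, let $\Psi\colon B_S \to \mathcal{F}_{\mathcal{K}^{o}}$ be any $C^{0}$-small perturbation of $P \circ \Phi$ and define the lift $\tilde{\Psi}(b) := \Phi(b) - P\Phi(b) + \Psi(b) \in \mathcal{F}_{\mathcal{K}^{\star}}$. Then $\|\tilde{\Psi} - \Phi\|_{C^{0}} = \|\Psi - P\Phi\|_{C^{0}}$, so $\tilde{\Psi}$ is a $C^{0}$-small perturbation of $\Phi$; moreover $P \circ \tilde{\Psi} = \Psi$ by construction. Solid coverage of $S$ by $\Phi$ gives, for each $s \in S$, a point $b \in B_S$ with $\tilde{\Psi}(b) = s$; since $s \in \mathcal{F}_{\mathcal{K}^{o}}$ already, $P(s) = s$, hence $\Psi(b) = s$. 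Thus $S \subset \Psi(B_S)$, which is exactly solid coverage.

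The only substantive content beyond invoking Proposition~\ref{exst} is the elementary lifting argument in the last paragraph, showing that solid coverage of a set that lies inside the smaller coordinate subspace is preserved under projection onto that subspace. I expect no genuine analytic obstacle: the whole corollary is a bookkeeping statement that converts the saturation property of $\hat{\mathcal{K}}$ (a purely combinatorial condition on $\mathbb{Z}^{d}$) together with the preceding proposition into the desired universal statement over all finite-dimensional components.
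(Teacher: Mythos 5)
Your argument is correct and follows the paper's route: the corollary is stated there as an immediate consequence of Proposition~\ref{exst} together with the definition of a saturating set, under which every finite $\mathcal{K}^{o}$ is (after adjoining $\hat{\mathcal{K}}$, which any proper extension must contain) the target of a chain of elementary extensions. Your additional lifting argument, showing that solid coverage of $S\subset\mathcal{F}_{\mathcal{K}^{o}}$ inside the larger component $\mathcal{F}_{\mathcal{K}^{\star}}$ descends to solid coverage in $\mathcal{F}_{\mathcal{K}^{o}}$, correctly fills in a bookkeeping point the paper leaves implicit.
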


Examples of saturated sets are provided in Section~\ref{sscm}.

%%%%%%%%%%%%%%%%%%%%%%%%%%%%%%%%%%%%%%%%%%%%%%%%%%%%%%%%%%%%%%%%%%%%%%%%%%%%%%%%%%%%%%%
%%%%%%%%%%%%%%%%%%%%%%%%%%%%%%%%%%%%%%%%%%%%%%%%%%%%%%%%%%%%%%%%%%%%%%%%%%%%%%%%%%%%%%%

\subsection{Proof of Approximative Lemma~\ref{appex}}
\label{ale}

It suffices to prove the Approximative Lemma  for  $\mathcal{K}$ being  an {\rm elementary extension} of  $\hat{\mathcal{K}}$, the rest being accomplished by induction. Let $\mathcal{K}=\hat{\mathcal{K}} \bigcup \{2r-s\},\ r,s \in \hat{\mathcal{K}}. $

It is convenient to proceed  with the time-variant change of basis in $H$,  passing from the exponentials
$e^{ik \cdot x}$ to the exponentials
%%\begin{equation}\label{fou}
$$f_k=e^{i(k \cdot x+|k|^2t)}, \ k \in \mathbb{Z}^d.$$
%%\end{equation}
Note that
 $(-i\partial_t +\Delta)f_k=0, \ \forall k \in \mathbb{Z}^d$.

We take a  $\mathcal{F}_{\mathcal{K}}$-valued family of the controlling source terms
\begin{equation}\label{wk'}
    b \mapsto W(t , b)=\sum_{k \in \mathcal{K}}w_k(t ; b)f_k,
\end{equation}
parameterized by $b$ from a compact $B \subset \mathbb{R}^N$, and  wish to construct a family of the controlling source terms $V(t;b)=\sum_{k \in \hat{\mathcal{K}}}v_k(t ; b)f_k$, which  satisfy (\ref{evew}) and whose range $\mathcal{F}_{\hat{\mathcal{K}}}$ has  one dimension less .

\subsubsection{Time-variant substitution}

We will seek  the family $b \mapsto V (t ,b)$ in   the form
\begin{equation}\label{twoc}
V(t ,b)=\tilde{V}(t , b)+  \partial_t v_r(t , b)f_r+ \partial_t v_s(t , b)f_s,
\end{equation}
where   $\tilde{V}(t , b)=\sum_{k \in \hat{\mathcal{K}}}\tilde{v}_k(t ; b)f_k$,
and  the Lipschitzian functions $v_r(t ,b), v_s(t ,b)$
will be  specified  in the course of the proof.
For some time we will omit dependence on $b$ in the notation.

Feeding the controls (\ref{twoc}) into the right-hand side of  equation (\ref{nls1})  we get
\begin{equation}\label{fein}
(-i\partial_t +\Delta)u=|u|^2u+\tilde{V}(t)+\dot{v}_r(t)f_r+\dot{v}_s(t)f_s.
\end{equation}

This equation can be given form
\begin{equation}\label{uiv}
(-i\partial_t +\Delta)\left(u-iV_{rs}(t)\right)=|u|^2u+\tilde{V}(t),
\end{equation}
where $V_{rs}(t)=v_r(t)f_r+ v_s(t)f_s$.

By a time-variant substitution
$$ u^*=u-iV_{rs}(t),$$
we  transform (\ref{uiv}) into  the equation:
\begin{eqnarray}\label{uste}
(-i\partial_t +\Delta)u^*=|u^*+iV_{rs}(t)|^2(u^*+iV_{rs}(t))+\tilde{V}(t)
= \\=|u^*|^2u^*  -i(u^*)^2\bar{V_{rs}}+ 2i|u^*|^2V_{rs}-V_{rs}^2\bar{u^*}+2u^*|V_{rs}|^2+i|V_{rs}|^2V_{rs}+\tilde{V}(t).
\nonumber
\end{eqnarray}

Imposing the constraints
\begin{equation}\label{vt0}
 v_r(0)=v_s(0)=0, \ v_r(T)=v_s(T)=0,
\end{equation}
 we achieve:
$u(0)=u^*(0)$, $u(T)=u^*(T)$, and hence the end-point maps $E_T,E^*_T$ of  the controlled equations (\ref{fein}) and (\ref{uste}) coincide provided (\ref{vt0}) hold.

\subsubsection{Fast oscillations and resonances}
 Now  we put {\it fast-oscil\-lations,} into the game, choosing $V_{rs}(t)$ in (\ref{uste}) of   the form
\begin{equation}\label{vrexp}
    V_{rs}(t)=v_r(t)f_r+ v_s(t)f_s=e^{i(t/\eps + \rho(t))}\check{v}_r(t)f_r+e^{i2t/\varepsilon}\check{v}_s(t)f_s,
    \end{equation}
    where $\check{v}_r(t),\check{v}_s(t),\rho(t)$ are Lipschitzian {\it real-valued} functions, which together with   small $\varepsilon >0$,   will be specified in the course of the proof.

We  classify those terms at the right-hand side of (\ref{uste}),  which contain $V_{rs}, \bar{V_{rs}}$,  as
{\it non-resonant} and {\it resonant} with respect to the substitution (\ref{vrexp}).

We call  a  term non-resonant if, after  the substitution \eqref{vrexp} the term results in a sum of fast-oscillating factors of the form  $p(u^*,V_{rs},t)e^{i \beta t/\varepsilon}, \ \beta \neq 0$, where $p(u,V_{rs},t)$ is polynomial in $u^*,\bar{u^*},V_{rs},\bar{V_{rs}}$, with coefficients, which are  Lipschitzian in  $t$ and independent of  $\varepsilon$. Otherwise, i.e. $\beta=0$, the term is resonant.

Crucial fact, to be established below, is that the {\it influence of non-resonant (fast-oscillating) terms onto the end-point map can be made arbitrarily small}, if the frequency $\beta/\varepsilon $ of the oscillating factor $e^{i \beta t/\varepsilon}$  is chosen sufficiently large.

 Direct verification shows that the terms
 $$i(u^*)^2\bar{V}_{rs}, \ 2i|u^*|^2V_{rs}, \ V_{rs}^2\bar{u}^*$$
 at the right-hand side of (\ref{uste}) are all non-resonant with respect to (\ref{vrexp}).

 \subsubsection{Resonant monomials in the quadratic  term $2u^*|V_{rs}|^2$: an obstruction}

Consider the quadratic term $2u^*|V_{rs}|^2$, which after  the substitution (\ref{vrexp}) takes form
 $$2u^*|V_{rs}|^2=2u^*\left(|\check{v}_r(t)|^2+|\check{v}_s(t)|^2\right)+
 4u^*\check{v}_r(t)\check{v}_s(t)\mbox{Re}\left(e^{-it/\eps}e^{i\rho(t)}
 f_r\bar{f}_s \right).   $$
 The last addend in the parenthesis is non-resonant, while
  the resonant  term $2u^*(|\check{v}_r(t)|^2+|\check{v}_s(t)|^2)$ is an example of so-called  {\it obstruction to controllability} in the
  terminology of geometric control.

      We can not annihilate or compensate this term but, as far as the group $e^{it\Delta}$ corresponding to the {\it linear}  Schroedinger equation is quasiperiodic,
             one can  impose conditions on the functions $\check{v}_r(\cdot),\check{v}_s(\cdot)$  in such a way, that for a chosen $T>0$ the influence of the obstructing term  onto the end-point map $E_T$ will be nullified.

Indeed,  proceeding  with   the time-variant substitution:
\begin{equation}\label{34}
% \nonumber to remove numbering (before each equation)
u^\star=u^*e^{-2i \Upsilon(t)}, \ \Upsilon(t)=\int_0^t(|\check{v}_r(t)|^2+|\check{v}_s(t)|^2)d\tau ,
\end{equation}
one gets  for $u^\star$ the equality:
$$
% \nonumber to remove numbering (before each equation)
(-i\partial_t +\Delta)u^\star e^{2i \Upsilon(t)}=
  (-i\partial_t +\Delta)u^*-2u^*(|\check{v}_r(t)|^2+|\check{v}_s(t)|^2).
$$
The equation (\ref{uste}) rewritten for $u^\star$ becomes
\begin{eqnarray}\label{eust}
(-i\partial_t +\Delta)u^\star=|u^\star|^2u^\star -i(u^\star)^2\bar{V_{rs}}e^{2i \Upsilon(t)}+2i|u^\star|^2V_{rs}e^{-2i \Upsilon(t)}- \nonumber  \\
-V_{rs}^2\bar{u^\star}e^{-4i \Upsilon(t)}+  4u^*2 \mbox{Re}\left(e^{i(t/\eps + \rho(t))}v_r(t)\bar{v}_s(t)\right)e^{-2i \Upsilon(t)}+ \\    + \tilde{V}(t)e^{-2i \Upsilon(t)} +i|V_{rs}|^2V_{rs}e^{-2i \Upsilon(t)}. \nonumber
\end{eqnarray}

    For the sake of maintaining the  end-point map $E_T$ unchanged, one imposes  {\it isoperimetric
conditions} on $\check{v}_r(t),\check{v}_s(t)$
\begin{equation}\label{ivrvs}
  \int_0^T (|\check{v}_r(t)|^2+|\check{v}_s(t)|^2)dt=\Upsilon(T)=\pi N,  \ N \in \mathbb{Z};
\end{equation}
 this
guarantees for the respective trajectories   $u^\star(0)=u^*\mbox(0),\ u^\star(T)=u^*\mbox(T)$.

\begin{remark}\label{upsi}
Although  the right-hand side of (\ref{eust}) has gained the 'oscillating factors' of the form  $e^{-i \mu \Upsilon(t)}$, the notions of resonant and non-resonant terms will not suffer changes, as long as $e^{-i\mu  \Upsilon(t)}$ is not 'fast oscillating'; for this sake in  the ongoing  construction $\Upsilon(t)$ will be chosen  bounded uniformly in $t,b$ and  $\eps>0. \ \Box$
\end{remark}

We introduce the notation $\tilde{\mathcal{N}}^\eps(u,t)$ for the sum of the non-resonant terms at the right-hand side of (\ref{eust}) and arrive  to the equation
\begin{equation}\label{exs}
 (-i \partial_t +  \Delta) u^\star   =|u^\star|^2u^\star  +\tilde{V}(t)e^{-2i \Upsilon(t)}+i|V_{rs}|^2V_{rs}e^{-2i \Upsilon(t)}+\tilde{\mathcal{N}}^\eps(u,t).
  \end{equation}

\subsubsection{Extending the control via cubic resonance monomial}
The only resonant monomial in the cubic term
%%%\begin{equation}\label{qute}
% \nonumber to remove numbering (before each equation)
 $i|V_{rs}|^2V_{rs}e^{-2i \Upsilon(t)}= i(V_{rs})^2\bar{V}_{rs}e^{-2i \Upsilon(t)},$
%%%\end{equation}
with $V_{rs},\Upsilon$,  defined by  \eqref{vrexp}, \eqref{34},
is
\begin{equation}\label{eros}
e^{2i(\rho (t)-\Upsilon(t))}\check{v}^2_r(t)\check{v}_s(t)f_r^2\bar{f_s}.
\end{equation}
We  join all the non-resonant monomials
of this term to $\tilde{\mathcal{N}}^\eps(u,t)$ in (\ref{exs}).

Recalling  that $f_m=e^{i(m\cdot x +|m|^2)t}$, we compute
$$f_r^2\bar{f_s}= e^{i((2r-s)\cdot x +(2|r|^2-|s|^2)t)}=f_{2r-s}e^{i((2|r|^2-|s|^2-|2r-s|^2)t)}=f_{2r-s}e^{-2i|r-s|^2t}, $$
and rewrite (\ref{eros}) in the form
 %%\begin{equation}\label{reso}
$$\check{v}^2_r(t)\check{v}_s(t)e^{2i(\rho(t)-|r-s|^2t- \Upsilon(t))}f_{2r-s},$$
%%\end{equation}
   seeing it  as an   {\it extending control} for the mode $f_{2r-s}$.

The equation (\ref{exs}) becomes now
\begin{eqnarray}\label{excu}
 (-i \partial_t +  \Delta) u^\star   =|u^\star|^2u^\star  + \tilde{V}(t)e^{-2i \Upsilon(t)}+ \nonumber \\
 +\check{v}^2_r(t)\check{v}_s(t)e^{2i(\rho(t)-|r-s|^2t- \Upsilon(t))}f_{2r-s}+\tilde{\mathcal{N}}^\eps(u,t).
  \end{eqnarray}

Now we  take care of the addend
\begin{equation}\label{vet}
  \tilde{V}(t)e^{-2i \Upsilon(t)}+\check{v}^2_r(t)\check{v}_s(t)e^{2i(\rho(t)-|r-s|^2t- \Upsilon(t))}f_{2r-s}
\end{equation}
 at the right-hand side of (\ref{excu}).
We wish to choose families of functions $\tilde{V}(t;b),\check{v}_r(t;b), \check{v}_s(t;b)$ in such a way that \eqref{vet}
 approximates $W(t;b)$ (see (\ref{wk'})) in ${\mathbf L}^1_t$-metric uniformly in $b \in B$.

Let
\begin{equation}\label{vtb2}
\tilde{V}(t;b)=\hat{W}(t;b)e^{2i\Upsilon (t;b)},
\end{equation}
where
$\hat{W}(t;b)= \sum_{k \in \hat{\mathcal{K}}, k \neq 2r-s}w_k(t ; b)f_k ,$
is obtained by omission of  the summand $w_{2r-s}f_{2r-s}$ in  $W(t;b)$.

The controls $\check{v}_r(t;b), \check{v}_s(t;b)$ will be constructed according to  the
following
\begin{lem}\label{5.4}
For a family of controls $b \mapsto w(t;b) \in {\mathbf L}^\infty [0,T]$,  continuous in ${\mathbf L^1_t}$-metric,  and  any  $\eps' >0$
one can construct families of real-valued functions
\begin{equation}\label{vch}
b \mapsto  \check{v}_r(t;b,\eps'),b \mapsto \check{v}_s(t;b,\eps'),
\end{equation}
such that:
i) the functions \eqref{vch} are  Lipschitzian in $t$;
$$ii)  \ \hat{v}_r(0)=\hat{v}_s(0)=0, \ \hat{v}_r(T)=\hat{v}_s(T)=0,$$
iii) the functions \eqref{vch} and their partial derivatives in $t$  depend on $b$ continuously in ${\mathbf L^1_t}$-metric; iii) for each $b, \eps'$ the conditions (\ref{vt0}),(\ref{ivrvs}) hold for them; iv) the ${\mathbf L^2_t}$-norms of the functions \eqref{vch} are equibounded for all $\eps'>0, b \in B$; and  v)
for
$$
D^{\eps'}_{rs}= \check{v}^2_r(t;b,\eps')\check{v}_s(t;b,\eps')- |w_{2r-s}(t,b)|,
$$
the estimate
\begin{equation}\label{aper}
  \|D^{\eps'}_{rs}\|_{{\mathbf L^1_t}}=\int_0^T \left|\check{v}^2_r(t;b,\eps')\check{v}_s(t;b,\eps')- |w_{2r-s}(t,b)|\right|dt \leq \eps' .
\end{equation}
holds uniformly in $b \in B. \ \Box$
\end{lem}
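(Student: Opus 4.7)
The idea is a separation-of-variables ansatz which reduces Lemma~\ref{5.4} to the choice of a single scalar parameter. By Remark~\ref{smow}, applied to the family $w_k$ produced by Lemma~\ref{m=1}, I may assume $w_{2r-s}(\cdot,b)$ is smooth in $t$ with derivatives continuous in $b\in B$ in $\mathbf{L}^1_t$-metric, and in particular equibounded in $\mathbf{L}^\infty_t$ over the compact $B$. I first construct a smooth nonnegative surrogate $g(t,b)$ for $|w_{2r-s}(t,b)|$ in two steps: (a) replace the absolute value by a smooth convex approximation, e.g.\ $h_\eta(z)=\sqrt{z^2+\eta^2}-\eta$, so that $h_\eta(w_{2r-s}(t,b))$ is smooth in $t$ and $\eta$-close to $|w_{2r-s}|$ uniformly in $(t,b)$; (b) multiply by a fixed Lipschitz cutoff $\chi_\delta(t)\in[0,1]$ equal to $1$ on $[\delta,T-\delta]$ and vanishing at $t=0,T$. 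For $\eta,\delta$ small enough (depending only on $\eps'$), $\|g(\cdot,b)-|w_{2r-s}(\cdot,b)|\|_{\mathbf{L}^1_t}\le\eps'/2$ uniformly in $b$, and both $g$ and $\partial_t g$ depend continuously on $b$ in $\mathbf{L}^1_t$.

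Next I take the ansatz
\begin{equation*}
\check{v}_r(t,b)=c(b)\,\chi_\delta(t),\qquad \check{v}_s(t,b)=\frac{g(t,b)}{c(b)^2},
\end{equation*}
where $c(b)>0$ is to be determined. Direct computation gives $\check{v}_r^2\check{v}_s=\chi_\delta(t)^2\,g(t,b)$, which differs from $g$ only on the endpoint strips of total length $2\delta$, contributing at most $2\delta\,\|g\|_{\mathbf{L}^\infty}$ to the $\mathbf{L}^1_t$-distance; choosing $\delta$ small, the estimate \eqref{aper} follows uniformly in $b$. The isoperimetric condition \eqref{ivrvs} becomes
\begin{equation*}
F(c,b):=c^2\|\chi_\delta\|_{\mathbf{L}^2}^2+c^{-4}\|g(\cdot,b)\|_{\mathbf{L}^2}^2=\pi N.
\end{equation*}
The map $c\mapsto F(c,b)$ is smooth on $(0,\infty)$, blows up at both ends, and has a unique minimum $F^*(b)$; since $g$ is equibounded in $\mathbf{L}^\infty$ by a constant depending only on $\sup_{b\in B}\|w_{2r-s}(\cdot,b)\|_{\mathbf{L}^\infty}$ and independent of $\eps'$, $\sup_{b\in B}F^*(b)$ is finite uniformly in $\eps'$. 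I fix once and for all an integer $N$ with $\pi N>\sup_{b\in B}F^*(b)$, and take $c(b)$ to be the larger root of $F(c,b)=\pi N$, lying on the strictly increasing branch $c>c^*(b)$; the implicit function theorem yields continuity of $c(b)$ in $b$.

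The remaining items (i)--(iv) are then routine verifications. Lipschitzness in $t$ and the endpoint conditions (ii) are inherited from $\chi_\delta$ and $g$; continuity in $b\in B$ in $\mathbf{L}^1_t$-metric of $\check{v}_r,\check{v}_s$ and of their $t$-derivatives follows from the analogous continuity of $c(b)$, $g(\cdot,b)$ and $\partial_t g(\cdot,b)$; and the equiboundedness of $\mathbf{L}^2_t$-norms (iv) holds by construction, since $\|\check{v}_r\|_{\mathbf{L}^2}^2+\|\check{v}_s\|_{\mathbf{L}^2}^2=\pi N$ is a fixed constant independent of $\eps'$. The only delicate point, and the main (though mild) obstacle, is the need to fix the integer $N$ uniformly in both $b\in B$ and $\eps'>0$; this is precisely what forces the uniform $\mathbf{L}^\infty$-control on the family $\{g(\cdot,b,\eps')\}$ in steps (a)--(b) and justifies the appeal to Remark~\ref{smow} for the $\mathbf{L}^\infty$-bound on $w_{2r-s}$.
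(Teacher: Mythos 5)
Your proof is correct, and it shares the paper's basic ansatz: a plateau profile for $\check{v}_r$ times a multiple of $|w_{2r-s}|$ for $\check{v}_s$, so that the product $\check{v}_r^2\check{v}_s$ reproduces $|w_{2r-s}|$ away from the endpoint strips, where the discrepancy is controlled in ${\mathbf L}^1_t$. Where you genuinely diverge is in how the isoperimetric condition \eqref{ivrvs} is enforced. The paper normalizes $\int_0^T\check{v}_r^2\,dt=\pi$ once and for all, sets $N=[A/\pi]+1$ with $A=\max_b\int|w^{\eps}(t,b)|^2dt$, and then pads $\check{v}_s$ on the strips $[0,\eps^2]\cup[T-\eps^2,T]$ so as to raise $\int_0^T|\check{v}_s|^2dt$ exactly to $\pi N$; the Cauchy--Schwarz inequality is then needed to show the padded mass does not pollute the estimate \eqref{aper}. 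You instead exploit the invariance of the product under $(\check{v}_r,\check{v}_s)\mapsto(\lambda\check{v}_r,\lambda^{-2}\check{v}_s)$ and tune a single scalar $c(b)$ so that $F(c,b)=\pi N$ with $N$ fixed a priori above $\sup_b F^*(b)$; this avoids the endpoint padding and the Cauchy--Schwarz step entirely, reducing the continuity in $b$ to a one-dimensional monotonicity/implicit-function argument. Both routes deliver items (i)--(v), and your observation that $\sup_bF^*(b)$ is bounded uniformly in $\eps'$ (so $N$ can be fixed independently of $\eps'$, which is what (iv) requires) is exactly the right point to isolate. Two small repairs: $w_{2r-s}(t,b)$ is complex-valued, so the surrogate should read $h_\eta(w)=\sqrt{|w|^2+\eta^2}-\eta$, with $|w|^2=w\bar{w}$ smooth in $t$ after the mollification of Remark~\ref{smow}; and the claim that $c\mapsto F(c,b)$ blows up at both ends fails in the degenerate case $\|g(\cdot,b)\|_{{\mathbf L}^2_t}=0$, where one takes the unique root $c=\sqrt{\pi N}/\|\chi_\delta\|_{{\mathbf L}^2_t}$ --- continuity of $b\mapsto c(b)$ survives this limit, so nothing is lost.
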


Lemma \ref{5.4} is proved in the Appendix. Meanwhile  we construct   the family
$\rho(t;b)$.

\begin{lem}\label{wappr}
Given  the family (\ref{vch}), constructed in the  previous Lemma,
there exists a family of Lipschitzian functions $\rho(\cdot ;b)$, such that $b \mapsto \rho(\cdot ;b)$ and $b \mapsto \partial_t\rho(\cdot ;b)$  are  continuous in ${\mathbf L^1_t}$-metric and
\begin{equation}\label{v)}
\int_0^T\left|\check{v}^2_r(t;b,\eps')\check{v}_s(t;b,\eps')e^{2i(\rho(t)-|r-s|^2t- \Upsilon(t))}-w_{2r-s}(t,b)\right|dt \leq \eps'. \ \Box
\end{equation}
\end{lem}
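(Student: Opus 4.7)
The plan is to pick $\rho(t;b)$ so that the phase factor $e^{2i(\rho(t;b)-|r-s|^2 t-\Upsilon(t;b))}$ approximates the normalized phase of $w_{2r-s}(t;b)$. Since Lemma~\ref{5.4} (applied with $\eps'/2$ in place of $\eps'$) already furnishes
\[
\int_0^T \bigl|\check{v}_r^2(t;b,\eps')\check{v}_s(t;b,\eps') - |w_{2r-s}(t;b)|\bigr|\,dt \le \eps'/2
\]
uniformly in $b \in B$, a triangle inequality reduces \eqref{v)} to bounding
\[
I(b) := \int_0^T |w_{2r-s}(t;b)|\,\bigl|e^{2i(\rho(t;b)-|r-s|^2 t-\Upsilon(t;b))} - e^{i\arg w_{2r-s}(t;b)}\bigr|\,dt
\]
by $\eps'/2$ uniformly in $b \in B$.

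The main obstacle is that $\arg w_{2r-s}(\cdot;b)$ is in general neither continuous nor Lipschitz near the zeros of $w_{2r-s}$, whereas $\rho(\cdot;b)$ must be Lipschitzian in $t$ and both $\rho,\partial_t\rho$ must depend continuously on $b$ in the ${\mathbf L}^1_t$-metric. To bypass this I first approximate $w_{2r-s}(\cdot;b)$ in ${\mathbf L}^1_t$-norm, uniformly in $b \in B$, by a family of the form $\tilde{w}(t;b) = \sigma(t;b)\,e^{i\psi(t;b)}$ with $\sigma \ge 0$ continuous and $\psi$ Lipschitzian in $t$, both jointly continuous in $(t,b)$. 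A concrete construction is to convolve $w_{2r-s}(\cdot;b)$ with a fixed smooth mollifier in $t$, obtaining a smooth family $\tilde{w}(t;b)$ whose ${\mathbf L}^1_t$-distance to $w_{2r-s}$ is arbitrarily small uniformly in $b$; on the $b$-dependent set $\{t : |\tilde{w}(t;b)| \ge \delta\}$ define $\psi := \arg\tilde{w}$, which is smooth there, and extend $\psi$ by piecewise linear interpolation across the complementary intervals.

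Having $\psi$ at hand, set
\[
\rho(t;b) := \frac{1}{2}\psi(t;b) + |r-s|^2 t + \Upsilon(t;b),
\]
so that $e^{2i(\rho-|r-s|^2 t - \Upsilon)} = e^{i\psi}$. Splitting $I(b)$ between the set $\{|\tilde w| \ge \delta\}$ and its complement, using the elementary bound $|e^{i\arg \tilde{w}}-e^{i\arg w_{2r-s}}| \le 2|w_{2r-s}-\tilde w|/|\tilde w|$ on the former and $|w_{2r-s}| \le |w_{2r-s}-\tilde w|+\delta$ on the latter, one controls $I(b)$ in terms of $\delta$ and $\|w_{2r-s}-\tilde w\|_{{\mathbf L}^1_t}$; fixing $\delta$ first and then choosing the mollifier sufficiently fine makes $I(b) < \eps'/2$. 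The required ${\mathbf L}^1_t$-continuity of $\rho$ and $\partial_t\rho$ in $b$ is inherited from the corresponding properties of $\psi$ and of $\Upsilon$, the latter via Lemma~\ref{5.4}, which guarantees equi-boundedness of $\|\check{v}_r\|_{{\mathbf L}^2_t}, \|\check{v}_s\|_{{\mathbf L}^2_t}$ and their continuity in $b$.
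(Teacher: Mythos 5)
Your choice of $\rho$ is, at its core, the one the paper makes: formula (\ref{38}) sets $\rho(t;b)=\frac12\mathrm{Arg}\left(w_{2r-s}(t,b)\right)+|r-s|^2t+\Upsilon(t;b)$, so that the exponential in (\ref{v)}) collapses to $e^{i\mathrm{Arg}(w_{2r-s})}$ and the integral reduces exactly to the quantity (\ref{aper}) furnished by Lemma~\ref{5.4}; the paper then settles regularity by appealing to Remark~\ref{smow} ("we may think that $w_{2r-s}$ are smooth in $t$, hence $\rho$ is Lipschitzian"). You have correctly identified that this last step is the delicate one --- smoothness of $w_{2r-s}$ does not make $\mathrm{Arg}(w_{2r-s})$ Lipschitz, or even continuous, across zeros of $w_{2r-s}$ --- and your device of replacing $\arg w_{2r-s}$ by a genuinely Lipschitz phase $\psi$ of a mollification $\tilde w$, at the price of an extra $\eps'/2$ recovered by splitting $I(b)$ over $\{|\tilde w|\ge\delta\}$ and its complement, is sound: the bound $\left|a/|a|-b/|b|\right|\le 2|a-b|/\max(|a|,|b|)$ does exactly what you need on the good set, and the weight $|w_{2r-s}|$ kills the bad set. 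So your argument is a more careful version of the paper's, addressing a point the paper glosses over.

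The one step I cannot accept as written is the assertion that $\psi$ and $\partial_t\psi$, obtained by taking $\arg\tilde w$ on the $b$-dependent set $\{t:|\tilde w(t;b)|\ge\delta\}$ and interpolating piecewise linearly across the complementary intervals, depend continuously on $b$ in ${\mathbf L}^1_t$. With a hard threshold this can fail: at a parameter value $b_0$ where a component of $\{|\tilde w|\ge\delta\}$ appears or disappears (a bump of $|\tilde w(\cdot;b)|$ grazing the level $\delta$ inside a long low region), the interpolant acquires or loses an anchor node whose prescribed value $\arg\tilde w(t^*;b)$ is only determined modulo $2\pi$ and need not match the previous interpolant there; consequently $\psi(\cdot;b)$ can change by an amount of order one on an interval of length of order one as $b$ crosses $b_0$, and likewise $\partial_t\psi$. (There is also the related issue of choosing branches of $\arg\tilde w$ on the several components consistently in $b$.) None of this damages the estimate (\ref{v)}) for each fixed $b$, but it breaks the continuity of the family $b\mapsto\rho(\cdot;b)$, which is precisely what the lemma must deliver for the downstream degree/solid-controllability argument. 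The defect is repairable --- e.g.\ by choosing $\delta$ generically for the given compact family so that no such tangencies occur, or by replacing the hard threshold and interpolation with a construction depending on $(t,b)$ through a single formula --- but as stated this is a gap you should close.
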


\begin{proof} We  choose
\begin{equation}\label{38}
\rho(t;b)=\frac{1}{2}Arg\left(w_{2r-s}(t,b)\right)+|r-s|^2t+\Upsilon(t;b).
\end{equation}
  As in the Remark~\ref{smow} we may think that $w_{2r-s}(t,b)$ are smooth in $t$ and hence $\rho(t;b)$ are  Lipschitzian in $t$. The dependence of $\rho$ and $\partial \rho /\partial t$ on $b$ is continuous in ${\mathbf L}^1_t$-metric.
By  (\ref{aper}),(\ref{38}) we conclude (\ref{v)}).
\end{proof}

\begin{remark}
         For each $\eps , \eps ' >0$ the functions $v^r(t,b),v^s(t,b)$ defined by   \eqref{twoc}, \eqref{vtb2}, \eqref{vrexp}, as well as their derivatives in $t$, depend continuously on $b$  in
$\mathbf{L^1_t}$-metric.
\end{remark}

By the construction  the  map $b \mapsto E_T(v(t,b,\eps ,\eps '))$, where $E_T$ is the end-point map of the equation \eqref{nlsrs},
coincides with the end-point map $b \mapsto  E^{\eps,\eps'}_T(b)$ of the equation
\begin{equation}\label{exw}
 (-i \partial_t +  \Delta) u^\star
 =|u^\star|^2u^\star  +W(t,b)+ D^{\eps
 '}_{rs}(t) +\tilde{\mathcal{N}}^\eps(u^\star,t,b), \  u^\star(0)=u^0.
 \end{equation}

The proof of Approximative Lemma~\ref{appex} would be completed by the following

\begin{lem}
\label{tecrel}
The end-point map $E^{\eps \eps '}_T (b)$ of the system (\ref{exw}) calculated for the family of controls,
 defined by the Lemmae~\ref{5.4},\ref{wappr} converges to the end-point map $E^{lim}_T$ of the 'limit system'
 \begin{equation}\label{exl}
 (-i \partial_t +  \Delta) u^\star
 =|u^\star|^2u^\star  +W(t,b),
 \end{equation}
  uniformly in $b$  as $\eps+\eps '  \rightarrow 0. \ \Box$
\end{lem}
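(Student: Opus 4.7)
The plan is to view the right-hand side of \eqref{exw} as a perturbation of the right-hand side of \eqref{exl} by $D^{\eps'}_{rs}(t,b)+\tilde{\mathcal{N}}^\eps(u^\star,t,b)$, and to invoke the continuous-dependence results of Section~\ref{prel}---Proposition~\ref{wecopar} for the ``slow'' piece $D^{\eps'}_{rs}$, and the forthcoming stronger continuity result (Proposition \ref{codep}), which uses a relaxation metric, for the fast-oscillating remainder $\tilde{\mathcal{N}}^\eps$. Throughout, I fix a ball of radius $M$ in $H$ that contains the trajectories of \eqref{exl} for all $b \in B$ (available via Proposition~\ref{gloex} applied to the continuous family $W(\cdot,b)$ on the compact $B$), and I work with a slightly larger ball in which perturbed solutions remain trapped as long as perturbations are small.

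For $D^{\eps'}_{rs}$, Lemma~\ref{5.4}(v) gives $\|D^{\eps'}_{rs}(\cdot,b)\|_{\mathbf{L}^1_t} \le \eps'$ uniformly in $b \in B$, so Proposition~\ref{wecopar} (applied with the roles of $G$ and $\phi$ assigned to the common cubic term plus $W$ and to $D^{\eps'}_{rs}$ respectively, and with the same Lipschitz constants provided by Lemma~\ref{prol}) yields
\[
\sup_{b\in B}\sup_{t\in[0,T]}\|u^{\eps,\eps'}(t,b)-u^{lim}(t,b)\|_H \le C\eps' + (\text{contribution of }\tilde{\mathcal{N}}^\eps).
\]

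For $\tilde{\mathcal{N}}^\eps$, I would expand it as a finite sum of monomials
\[
p_\beta(u^\star,\bar u^\star,t;b)\,e^{i\beta t/\eps}, \qquad \beta \neq 0,
\]
where each $p_\beta$ is a polynomial in $u^\star,\bar u^\star$ whose $t$-dependent coefficients combine the Lipschitz-in-$t$ functions $\check v_r, \check v_s, \rho, \Upsilon$ (built in Lemmas~\ref{5.4} and \ref{wappr}) with the fixed exponentials $f_k$. Lemmas~\ref{5.4}(i,iv) and~\ref{wappr} ensure that the $\mathbf{L}^\infty_t$-norms of these coefficients and their $t$-derivatives are bounded uniformly in $\eps$ and in $b\in B$. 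Substituting the oscillatory term into the Duhamel representation for \eqref{exw} and integrating by parts in $\tau$ produces a prefactor $\eps/(i\beta)$, with boundary terms and a bulk term containing $\partial_\tau\bigl(e^{-i\tau\Delta}p_\beta\bigr)$; after moving $e^{-i\tau\Delta}$ to the other side (isometry on $H$) and replacing $\partial_\tau u^\star$ via the equation \eqref{exw}, the remaining integrand is polynomial in $u^\star,\bar u^\star$ with coefficients in $\mathbf{L}^1_t H$, bounded uniformly in $\eps,\eps',b$ by the product estimate of Lemma~\ref{prol} and the uniform $H$-bound on $u^\star$. This is exactly the content captured by the relaxation metric of Proposition~\ref{codep}: $\tilde{\mathcal{N}}^\eps$ is small in that metric, of order $O(\eps)$, uniformly in $b$.

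The step I expect to be most delicate is closing the bootstrap self-consistently: the integration by parts introduces $\partial_\tau u^\star$, which is itself forced by $\tilde{\mathcal{N}}^\eps$; one has to verify that after substitution the resulting error remains $O(\eps)$ and then apply a Gronwall estimate to turn an integral bound into a $C^0_t H$-bound. The uniform-in-$b$ part is handled by continuity of $W(\cdot,b)$ and of the constructed $\check v_r,\check v_s,\rho$ in $b$ over the compact $B$ in $\mathbf{L}^1_t$-metric, which makes all constants above $b$-independent. Combining both contributions gives
\[
\sup_{b\in B}\bigl\|E^{\eps,\eps'}_T(b) - E^{lim}_T(b)\bigr\|_H \le C(\eps + \eps') \xrightarrow{\eps+\eps'\to 0} 0,
\]
which is the assertion of the lemma.
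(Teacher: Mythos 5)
Your top-level decomposition is the right one and matches the paper's: treat $D^{\eps'}_{rs}$ as an $\mathbf{L}^1_t$-small perturbation via Proposition~\ref{wecopar}, and treat $\tilde{\mathcal{N}}^\eps$ via the relaxation-metric continuity result (Proposition~\ref{codep}). But the mechanism you propose for the fast-oscillating part --- integrating by parts in the Duhamel integral along the actual trajectory and then substituting $\partial_\tau u^\star$ from the equation --- is not how Proposition~\ref{codep} works, and as written it has a genuine gap. The bulk term you produce contains $\partial_\tau\bigl(e^{-i\tau\Delta}p_\beta(u^\star(\tau),\bar u^\star(\tau),\tau)\bigr)$, hence $\Delta e^{-i\tau\Delta}p_\beta$, and also $\partial_\tau u^\star=-i(\,\cdot\,-\Delta u^\star)$; both lose two derivatives and cannot be bounded in $H^s$ ``by the product estimate of Lemma~\ref{prol} and the uniform $H$-bound on $u^\star$'' --- a mild solution is not even differentiable in $t$ with values in $H^s$. (The cancellation $\partial_\tau(e^{-i\tau\Delta}u^\star)=-ie^{-i\tau\Delta}G$ that saves the linear case does not survive because $e^{-i\tau\Delta}$ does not distribute over the products appearing in $p_\beta$.) So the ``delicate bootstrap'' you flag is not merely delicate; the step would fail without an additional idea.

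The paper's route avoids all of this. The relaxation seminorm $\|\phi\|^{\mathrm{rx}}_c=\sup_{t,t',\|u\|\le c}\|\int_t^{t'}\phi(\tau,u)\,d\tau\|$ is taken over \emph{frozen} $u$, so smallness of $\|\tilde{\mathcal{N}}^\eps\|^{\mathrm{rx}}_c$ is just the Riemann--Lebesgue lemma applied to the coefficients $p_\beta(u,\bar u,\cdot)e^{i\beta\tau/\eps}$ with $u$ fixed --- no $\partial_\tau u^\star$ ever appears. The real work is then in Lemma~\ref{coconv}: one passes from frozen $u$ to $\tilde u(\tau)$ by partitioning $[0,T]$ into intervals of length $\eta$, freezing $\tilde u$ at the left endpoints, and balancing the modulus-of-continuity error $C\omega(\eta)$ against $(T/\eta)\|\phi\|^{\mathrm{rx}}$ with $\eta=\|\phi\|_{\mathrm{rx}}^{1/2}$; and one handles the group $e^{-i\tau\Delta}$ not by differentiating it but by projecting onto finitely many Fourier modes $\Pi_n$ (where $(e^{-i\xi\Delta}-I)\Pi_n$ is small for small $\xi$) and controlling the tail via the complete-boundedness hypothesis (iii) on the family $\Phi$ --- a hypothesis your proposal never verifies or uses. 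Adopting that two-step argument, together with your (correct) treatment of $D^{\eps'}_{rs}$ and the Gronwall closure from \eqref{gro54}, gives the lemma.
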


Would  the term  $\tilde{\mathcal{N}}^\eps(u^\star,t,b)$ be missing in (\ref{exw})  we could derive the conclusion of the Lemma from the Proposition~\ref{wecopar}.
In the presence  of the fast-oscillating term $\tilde{\mathcal{N}}^\eps(t,u)$ the needed passage to a limit system  will be established by virtue of  Proposition~\ref{codep} of the next Subsection.

%%%%%%%%%%%%%%%%%%%%%%%%%%%%%%%%%%%%%%%%%%%%%%%%%%%%%%%%%%%%%%%%

\subsection{On continuity of solutions in the  right-hand side with respect to the relaxation metric}
\label{corel}

The results presented   in this Subsection, regard continuous dependence  of  the mild solutions of NLS equation with respect to  the perturbations of its right-hand side, which are small in so-called {\it relaxation seminorms}.

 The seminorms are suitable for treating fast oscillating terms.  In finite-di\-men\-si\-onal context the respective continuity   results are part of  {\it theory of relaxed controls}. Several relaxation results for semilinear systems in Banach spaces can be found in  \cite{Fat,Fr}.
Below we provide  version adapted for  the  proof of Lemma~\ref{tecrel}.

Let us come back to  the semilinear equation (\ref{nlsmod})
and its  perturbation (\ref{nlsper}).
We assume the perturbations $\phi:[0,T] \times H \to H$ in (\ref{nlsper}) to belong to a family $\Phi$,
which satisfies the following conditions:
\begin{enumerate}
  \item[(i)] elements of  $\Phi$  are continuous functions;
  \item[(ii)]  the family $\Phi$ is equibounded and equi-Lipschitzian, which  means that each $\phi \in \Phi$ together with $G:[0,T] \times H \rightarrow H$   satisfy the properties
\eqref{congfi},(\ref{gbip}), (\ref{fbip}) with  the same functions $\beta_c(t)$.
  \item[(iii)] the set
$\{\phi(t,u(t))| \ t \in [0,T], \phi \in \Phi\} $  is completely bounded in $H$ for each  $u(\cdot) \in C([0,T],H)$.\footnote{The property  of complete boundedness would follow, for example, from the  {\it complete boundedness  of the sets} $\Phi(t,u)=\{\phi(t,u)|\  \phi \in \Phi\} $ for each {\it fixed} couple $(t,u)$ together with  {\it upper semicontinuity of the set-valued map} $(t,u) \mapsto \Phi(t,u)$.}
\end{enumerate}

 We introduce the
{\it relaxation seminorm
$\|\cdot\|^\mathrm{rx}_{c}$}  for the elements of $\Phi$ by the formula:
%%\begin{equation}\label{reno}
  $$\|\phi\|^\mathrm{rx}_{c}=\sup_{t,t' \in[0,T],\|u\|\leq c}\left\|
\int\limits_t^{t'} \phi(\tau,u)d \tau \right\|_H.$$
%%\end{equation}
            As one can see the relaxation seminorms of fast-oscillating functions are small. For example $\|f(t)e^{it/\eps}\|_c^{rx} \rightarrow 0$, as $\eps \rightarrow 0$ for each function $f \in {\mathbf L}^1[0,T]$ (and each $c$) accoding to Lebesgue-Riemann lemma.

Now we formulate the needed continuity result from which Lemma~\ref{tecrel} will follow.

 \begin{proposition}
 \label{codep}
Let a mild solution $\tilde{u}(t) \in C([0,T],H)$ of the NLS equation (\ref{nlsmod})   satisfy  $\sup_{t \in [0,T]}\|u(t)\|_H <c$.
Let the family $\Phi$ of perturbations satisfy the conditions (i)-(iii) just introduced.  Then $\forall \eps >0 \exists \delta >0$
such that,   whenever $\phi \in \Phi, \ \|\phi\|^\mathrm{rx}_{c} +\|u^0-\tilde{u}^0\|_H < \delta $ ,
then the mild solution $u(t)$ of the perturbed equation (\ref{nlsper})
 exists on the interval $[0,T]$, is unique and satisfies the estimate
%%\begin{equation}\label{est}
$\sup_{t \in [0,T]}\left\|u(t)-\tilde{u}(t)\right\|_H <\eps . \ \Box$
%%\end{equation}
\end{proposition}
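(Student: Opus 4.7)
The plan is to reduce the statement to an estimate on the single "genuinely new" term
$$J(t)=\left\|\int_0^t e^{-i\tau\Delta}\phi(\tau,\tilde{u}(\tau))\,d\tau\right\|_H,$$
and then exploit the relaxation seminorm together with the compactness assumption (iii) to force $\sup_{t\in[0,T]}J(t)$ to $0$. Writing $y(t)=u(t)-\tilde{u}(t)$ in Duhamel form, using that $e^{it\Delta}$ is an isometry of $H$, splitting $\phi(\tau,u)=\phi(\tau,\tilde{u})+[\phi(\tau,u)-\phi(\tau,\tilde{u})]$, and invoking the Lipschitz bounds \eqref{gbip}, \eqref{fbip}, I would obtain, as long as the putative solution $u(t)$ stays in the $c$-ball,
$$\|y(t)\|_H\le\|u^0-\tilde{u}^0\|_H+2\int_0^t\beta_c(\tau)\|y(\tau)\|_H\,d\tau+J(t),$$
which upon Gronwall yields $\sup_t\|y(t)\|_H\le C\bigl(\|u^0-\tilde{u}^0\|_H+\sup_t J(t)\bigr)$ with $C=\exp\bigl(2\int_0^T\beta_c\bigr)$. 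Thus the whole game is to bound $\sup_tJ(t)$.

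The heart of the argument is a partition-and-freeze scheme. Given $\eta>0$, introduce a mesh $0=t_0<t_1<\cdots<t_n=T$ of step $h=T/n$ and decompose the integrand on each $[t_j,t_{j+1}]$ by replacing $e^{-i\tau\Delta}$ with $e^{-it_j\Delta}$ and $\tilde u(\tau)$ with $\tilde u(t_j)$. This produces three kinds of terms. The semigroup-freezing error is where assumption (iii) becomes indispensable: the set $\bar K=\overline{\{\phi(\tau,\tilde u(\tau)):\tau\in[0,T],\,\phi\in\Phi\}}$ is compact in $H$, and strong continuity of the unitary Schr\"odinger group on a compact set is uniform, so for $h$ small enough $\|(e^{-i(\tau-t_j)\Delta}-I)v\|_H<\eta$ for every $v\in\bar K$ and every $|\tau-t_j|\le h$, contributing at most $\eta T$ to $J(t)$. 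The state-freezing error is controlled by \eqref{fbip} and uniform continuity of $\tilde u$ on $[0,T]$, giving at most $\omega_{\tilde u}(h)\int_0^T\beta_c(\tau)\,d\tau$, which tends to $0$ as $h\to 0$. Finally the main "frozen" sum $\sum_j e^{-it_j\Delta}\int_{t_j}^{t_{j+1}\wedge t}\phi(\tau,\tilde u(t_j))\,d\tau$ is bounded termwise by the very definition of the relaxation seminorm: each $e^{-it_j\Delta}$ is an isometry and $\|\tilde u(t_j)\|_H<c$, so each inner integral has norm at most $\|\phi\|^{\mathrm{rx}}_c$, and the whole sum is at most $n\,\|\phi\|^{\mathrm{rx}}_c$.

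Putting the three estimates together, $\sup_tJ(t)\le\eta T+\omega_{\tilde u}(h)\int_0^T\beta_c+n\,\|\phi\|^{\mathrm{rx}}_c$. For prescribed $\eps>0$ I would first pick $\delta_0$ with $C\delta_0<\eps$ and $\delta_0<c-\sup_t\|\tilde u(t)\|_H$, then choose $h$ (hence a definite $n$) making the first two addends below $\delta_0/2$, and finally set $\delta=\min\{\delta_0/2,\delta_0/(2n)\}$ in the hypothesis $\|u^0-\tilde u^0\|_H+\|\phi\|^{\mathrm{rx}}_c<\delta$. Existence of $u$ on all of $[0,T]$ then follows by the standard bootstrap built on Proposition~\ref{loex}: the set on which $\|u(t)\|_H\le c$ is closed and nonempty; on it the a priori estimate gives $\|y\|<c-\sup_t\|\tilde u\|_H$, preventing the norm from reaching $c$, so the maximal existence time equals $T$. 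Uniqueness is immediate from Gronwall applied to the difference of two putative solutions.

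The main obstacle is exactly the weakness of the relaxation seminorm as compared to $\|\cdot\|^1_{T,c}$: the group $\{e^{-is\Delta}\}$ is only strongly (not uniformly) continuous on $H$, so freezing $e^{-i\tau\Delta}$ inside the integral is not automatic. Assumption (iii) is tailored to overcome this, promoting pointwise strong continuity on $\bar K$ to a uniform one, which is what permits the order of limits "first refine the partition, then shrink the seminorm" to be carried out consistently.
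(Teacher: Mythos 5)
Your argument is correct and follows essentially the same route as the paper: reduce via Duhamel and Gronwall to bounding $\sup_{t}\bigl\|\int_0^t e^{-i\tau\Delta}\phi(\tau,\tilde u(\tau))\,d\tau\bigr\|_H$, then partition $[0,T]$, bound the frozen sum by $n\,\|\phi\|^{\mathrm{rx}}_{c}$, and absorb the state- and semigroup-freezing errors using the Lipschitz bounds and the compactness assumption (iii). The paper (Lemma~\ref{coconv} in the Appendix) organizes the same estimate in two sequential steps and obtains the uniform strong continuity of $e^{-i\tau\Delta}$ on the compact range through finite-dimensional projections $\Pi_n$ rather than by quoting it as an abstract fact, but the content is identical.
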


{\it Sketch of the proof of the Proposition~\ref{codep}.} Under the assumptions of the Proposition the solution of the equation (\ref{nlsper}) exists locally and is unique (see  Proposition~\ref{loex}).
Global existence will follow from the boundedness  of the $H$-norm of the solution on $[0,T]$.

We start with  the estimate (\ref{gro54})  by which:
    $$\|u(t)-\tilde{u}(t)\| \leq  \left(\|u^0-\tilde{u}^0\|+\left\|\int_0^t e^{-i\tau \Delta}\phi(\tau,\tilde{u}(\tau))d\tau \right\|\right)C'e^{C\int_0^t \beta_c (\tau)d\tau}.$$

The conclusion of the Proposition~\ref{codep} would follow  from

\begin{lem}\label{coconv}Let the family $\Phi$ satisfy the assumptions of the Proposition~\ref{codep}, and let $\tilde{u}(t)$ be  solution of (\ref{nlsmod}). Then $\forall \eps >0, \ \exists \delta>0$ such that $\forall \phi \in \Phi$:
%%\begin{equation}\label{coc}
  $$        \|\phi\|^{rx}< \delta \Rightarrow   \left\|\int_0^t e^{-i\tau \Delta}\phi(\tau,\tilde{u}(\tau))d\tau \right\| < \eps . \ \Box $$
%%\end{equation}
\end{lem}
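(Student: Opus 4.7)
The plan is to approximate both $\tilde u(\tau)$ and the unitary group $e^{-i\tau\Delta}$ by step functions on a fine partition of $[0,T]$, so that the integral reduces to a finite sum of pieces of the form $e^{-it_i\Delta}\int_{t_i}^{t_{i+1}}\phi(\tau,\tilde u(t_i))\,d\tau$, each of which is directly bounded by $\|\phi\|^{\mathrm{rx}}_{c}$. Concretely, given $\eps>0$, I would choose an integer $N$ (to be fixed below), partition $[0,T]$ into equal subintervals $[t_i,t_{i+1}]$ of length $T/N$, and for arbitrary $t\in[0,T]$ split the integral into a sum over these subintervals plus a terminal incomplete piece. On each subinterval decompose
$$e^{-i\tau\Delta}\phi(\tau,\tilde u(\tau))=(e^{-i\tau\Delta}-e^{-it_i\Delta})\phi(\tau,\tilde u(\tau))+e^{-it_i\Delta}(\phi(\tau,\tilde u(\tau))-\phi(\tau,\tilde u(t_i)))+e^{-it_i\Delta}\phi(\tau,\tilde u(t_i)),$$
producing three kinds of contributions which I call (I), (II), (III).

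Contribution (III), summed over $i$, is at most $(N+1)\|\phi\|^{\mathrm{rx}}_{c}$: since $e^{-it_i\Delta}$ is an $H$-isometry and $\|\tilde u(t_i)\|\le c$, each inner integral has $H$-norm at most $\|\phi\|^{\mathrm{rx}}_{c}$ by definition of the relaxation seminorm. Contribution (II) is handled by the equi-Lipschitz hypothesis (\ref{fbip}), which yields $\|\phi(\tau,\tilde u(\tau))-\phi(\tau,\tilde u(t_i))\|_H\le\beta_c(\tau)\,\omega_{\tilde u}(T/N)$, where $\omega_{\tilde u}$ is the modulus of uniform continuity of $\tilde u$ on $[0,T]$; summing over $i$ gives the bound $\omega_{\tilde u}(T/N)\,\|\beta_c\|_{{\mathbf L}^1}$, which depends only on $\tilde u$ and tends to $0$ as $N\to\infty$, \emph{uniformly} in $\phi\in\Phi$ thanks to the common integrable majorant $\beta_c$ supplied by condition (ii).

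Contribution (I) is where hypothesis (iii) enters. Let $K$ denote the closure in $H$ of $\{\phi(\tau,\tilde u(\tau)):\tau\in[0,T],\,\phi\in\Phi\}$; by (iii) this set is compact. The one-parameter unitary group $e^{-i\tau\Delta}$ is strongly continuous, and strong continuity together with compactness of $K$ yields joint uniform continuity of $(\tau,v)\mapsto e^{-i\tau\Delta}v$ on $[0,T]\times K$. Hence there is a modulus $\omega_2(\cdot)\to 0$ (independent of $\phi\in\Phi$) such that $\|(e^{-i\tau\Delta}-e^{-it_i\Delta})v\|_H\le\omega_2(T/N)$ for every $v\in K$ and $\tau\in[t_i,t_{i+1}]$, so the total (I)-contribution is bounded by $T\,\omega_2(T/N)$.

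To conclude, first pick $N$ large enough that (I) and (II) are each below $\eps/3$ for every $\phi\in\Phi$ and every $t\in[0,T]$, and then set $\delta=\eps/(3(N+1))$; for $\|\phi\|^{\mathrm{rx}}_{c}<\delta$ the full bound $<\eps$ follows, uniformly in $t$ and $\phi$, which is the claim. The main subtle point is that the estimates for (I) and (II) must be uniform over the entire family $\Phi$: this uniformity is precisely what hypotheses (ii) and (iii) deliver. Without joint compactness in (iii), the modulus $\omega_2$ could degrade along a sequence $\phi_n\in\Phi$ and the whole argument would collapse, so that is where I expect the real work to lie.
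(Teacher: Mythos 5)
Your proof is correct and follows essentially the same route as the paper's: a partition of $[0,T]$, the equi-Lipschitz bound (\ref{fbip}) together with the modulus of continuity of $\tilde u$ to freeze the second argument, the relaxation seminorm to control the frozen pieces, and condition (iii) to make the strong continuity of $e^{-i\tau\Delta}$ uniform on the range of the family. The paper merely organizes this in two stages (first reducing to the composed family $\varphi=\phi(\cdot,\tilde u(\cdot))$ and its relaxation norm, then handling the group via the finite-dimensional projections $\Pi_n$ rather than your uniform-continuity-on-compacts argument, which is an equivalent way of exploiting complete boundedness).
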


Proof of the Lemma can be found in  Appendix.

Let us  remark on validity of conditions of the Proposition~\ref{codep} for the limit equation \eqref{exl} and for its perturbation \eqref{exw}.

The   perturbation  $D^{\eps '}_{rs}(t) +\tilde{\mathcal{N}}^\eps(u^\star,t,b)$ at the right-hand side of (\ref{exw}) can be seen as an operator $\phi : [0,T] \times H \to H$:
\begin{eqnarray*}
% \nonumber to remove numbering (before each equation)
\forall u(\cdot) \in H: \  \phi^\eps: u(x) \mapsto \\ W^{0}(t,x)+   u(x)W^{11}(t,x)+\bar{u}W^{12}(t,x)+u^2(x)W^{21}(t,x)+ |u(x)|^2W^{22}(t,x),
\end{eqnarray*}
where $W^{ij}(t,x)=w(t)e^{ik \cdot x}e^{i \rho(t)}e^{iat/\eps}$, and  $w(t),\rho(t)$ are Lipschitzian.
The continuity, equiboundedness and equi-Lipschitzianness are concluded by application of 'Product Lemma'~\eqref{prol}.

To  confirm the complete boundedness assumption for $\phi^\eps(t,\tilde{u}(t))$, where $\tilde{u}(t)$ is continuous on $[0,T]$, we substitute the factors $e^{iat/\eps}$ in the coefficients $W_{ij}(t,x)$ by $e^{i \theta}, \ \theta \in \mathbb{T}^1$  and arrive to a function $\psi(t,\theta), \ \theta \in \mathbb{T}^1$, which is  continuous on $[0,T] \times \mathbb{T}^1$, and whose compact range contains the range of $\phi^\eps(t,\tilde{u}(t))$ for all $\eps>0$.

\section{Controllability proofs (Main result 1)}
\label{proapc}

We have established above (Co\-rol\-la\-ry~\ref{safdc}) that whenever    set $\hat{\mathcal{K}}$ of the controlled modes is saturating, then the NLS equation
 \eqref{nls1}-\eqref{sote0} is solidly controllable in projection on any 'coordinate' subspace  $\mathcal{F}_{\mathcal{K}}$ with finite $\mathcal{K} \subset \mathbb{Z}^d$.

In the Section we prove that  this  implies 
 controllability in projection on any
finite-di\-men\-si\-onal  subspace of $H^s, s >d/2$ and $H^s$-approximate controllability. 

Together with the  description of the  classes of saturating sets, provided in Section~\ref{sscm}, this would complete the proof of the Main result~1.

\subsection{Approximate controllability}

Let us fix $\tilde{\varphi}, \hat{\varphi} \in H^s$ and $\eps >0$,
we wish to steer the NLS equation  from
$u^0=\tilde{\varphi}$ to the $\eps$-neighborhood of $\hat{\varphi}$
in the $H^s$-metric.

Consider the Fourier expansions  for $\tilde{\varphi}, \hat{\varphi}$ with respect to  $e^{i k\cdot x},\ k \in \mathbb{Z}^d$.
 Denote by $\Pi_N$ the orthogonal projection of $\varphi$ onto the
space of modes $\mathcal{F}_N=\mbox{Span}\{e^{i k\cdot x}, \ |k| \leq \}$. Obviously  $\Pi_N(\tilde{\varphi})\rightarrow
\tilde{\varphi},\Pi_N(\hat{\varphi}) \rightarrow \hat{\varphi}$ in
$H^s$ as $N \rightarrow \infty$.

Choose such $N$  that the $H^s$-norms  of
$(I-\Pi_N)\tilde{\varphi}$ and $(I-\Pi_N)\hat{\varphi}$ are $\leq \eps/4$.

By Lemma~\ref{m=1}
 there exists a family of controlling source terms
$W(b)= \sum_{|k| \leq N}w_k(t;b)f_k$
such that $\Pi_N( W(b))$ covers
$\Pi_N (\hat{\varphi})$ solidly and in addition
  $\|(I-\Pi_N )(E_T( W(b))-\tilde{\varphi})\| \leq \eps/4$. Then   $\|(I-\Pi_N) E_T( W(b))\| \leq \eps/2$.

If  a set  $\hat{\mathcal K}$ of controlled modes is saturating, then
$\{k|\ |k| \leq N\}$ is a proper extension  of $\hat{\mathcal K}$.
  By Approximative Lemma~\ref{appex} there exists family of controlling source terms
  $V(b)=\sum_{k \in \hat{\mathcal K}}v_k(t;b)f_k$ such that
      $$\| E_T(V(b)) -  E_T(W(b))\| \leq \eps/4 , \ \forall b \in B , $$
      and $ \Pi_N E_T(V(b))$ covers the point $\Pi_N (\hat{\varphi})$.
 Then  $\forall b: \ \|(I-\Pi_N) E_T (V(b))\| \leq 3\eps/4$ and for some $\hat{b}$:
  $\Pi_N E_T(V(\hat{b}))=\Pi_N \hat{\varphi}$. Then
  $\|E_T(V(\hat{b}))-\hat{\varphi}\| \leq \eps. \ \Box$

\subsection{Controllability in finite-di\-men\-si\-onal projections}

Let $\mathcal L$ be  $\ell$-dimen\-si\-onal subspace of $H^s$ and
$\Pi^{\mathcal L}$ be the orthogonal projection of $H^s$ onto
${\mathcal L}$.

First we  construct a finite-di\-men\-si\-onal
{\it coordinate} subspace which is projected by $\Pi^{\mathcal L}$ onto
${\mathcal L}$. Moreover for each $\eps >0$
one can find  a finite-di\-men\-si\-onal {\it coordinate subspace} $\mathcal{L}^{\mathcal{C}}$  and   its  $\ell$-di\-men\-si\-onal (non-coordinate) subsubspace
${\mathcal L}_\eps$, which is $\eps$-close to ${\mathcal L}$. The
latter means that not only $\Pi^{\mathcal L}{\mathcal
L}_\eps={\mathcal L}$ but also the isomorphism $\Pi_\eps=\Pi^{\mathcal L}|_{{\mathcal
L}_\eps}$ is $\eps$-close to the identity operator.
It is an easy linear-algebraic construction; which can be found in  \cite[Section 7]{AS06}.

Without lack of generality we may assume that the orthogonal projection $\Pi_{\mathcal C}$ onto $\mathcal{L}^{\mathcal{C}}$ satisfies:
$\|\Pi_{\mathcal C}(\tilde{\varphi})-\tilde{\varphi}\|_{H^s} \leq \eps$.

As far as the set $\hat{\mathcal K}$ of controlled modes is saturating, $\mathcal{C}$
is proper extension of $\hat{\mathcal K}$ and the system is solidly
controllable in the observed component $q^{\mathcal{C}}$.

Let $\mathcal{B}$ be a ball in  $\mathcal{L}$ centered at the origin. Consider $\mathcal{B}^\eps = (\Pi_\eps)^{-1}\mathcal{B}$; obviously $ \mathcal{B}^\eps \subset \mathcal{L}^\eps \subset \mathcal{L}^{\mathcal{C}}$. We take a ball $\mathcal{B}_\mathcal{C}$ in $\mathcal{L}^{\mathcal{C}}$, which contains $ \mathcal{B}^\eps$ and hence $\Pi_{\mathcal{L}}(B_\mathcal{C}) \supset B$.

Reasoning as in the previous Subsection one  establishes existence
of a family of  controls
  $V(b)=\sum_{k \in \hat{\mathcal K}}v_k(t;b)f_k$ such that
     $ \Pi_{\mathcal{C}}E_T(V(b))$ covers $\mathcal{B}_{\mathcal{C}}$ solidly and
$\forall b: \ \|(I-\Pi_{\mathcal{C}}) E_T (V(b))\| \leq 2 \eps .$

Then choosing $\eps>0$ sufficiently small we achieve that
$$\Pi^{\mathcal{L}}E_T(V(b))= \Pi^{\mathcal{L}}\left(\Pi_{\mathcal{C}}+ (I-\Pi_{\mathcal{C}})\right)E_T(\hat{V}(b))$$
covers $\mathcal{B}. \ \Box$

\section{Lack of exact controllability proof (Main result 2)}
Let us represent the controlled cubic defocusing NLS equation (\ref{nls1})-(\ref{sote0}) in the form
\begin{equation}\label{nlsdot}
    (-i\partial_t+\Delta)u=|u|^2u+\sum_{k \in \hat{\mathcal K}}\dot{w}_k(t) f_k, \ u|_{t=0}=u^0 \in H^s,
\end{equation}
where  $f_k=e^{i(k \cdot x +|k|^2t)}$ and $\hat{\mathcal K} \subset \mathbb{Z}^d$ is a finite set.
The controls $\dot{w}_k(t) \in {\mathbf L}^{1}\left([0,T],\mathbb{C}\right)$  are
time-derivatives of absolutely continuous functions
$w_k(t)$,  $w_k(0)=0$.
    In this Section ${\mathbf W}^{1,1}([0,T],\mathcal{F}_{\hat{\mathcal K}})$ stays for the  space of $\mathcal{F}_{\hat{\mathcal K}})$-valued  absolutely continuous functions, vanishing at $t=0$.

 Global existence and uniqueness results for solution of this equation in  $C([0,T],H^s)$  is classical (Section~\ref{prel}).

Consider the end-point map $E_T: \left(\sum_{k \in \hat{\mathcal K}}\dot{w}_k(t) f_k\right) \mapsto u|_{t=T}$ which maps the  space of
  controlling source terms from ${\mathbf L}^1([0,T],\mathcal{F}_{\hat{\mathcal K}})$ into the state space $H^s$. The image of $E_T$  is  time-$T$ attainable set $\mathcal{A}_{T,u^0}$ of the controlled equation (\ref{nlsdot}).

Introducing
$W(t)=\sum_{k \in \hat{\mathcal K}}w_k(t) f_k \in {\mathbf W}^{1,1}([0,T],\mathcal{F}_{\hat{\mathcal K}})$ we bring (as in  Subsection~\ref{ale}) the equation (\ref{nlsdot}) to  the form
$(-i\partial_t+\Delta)(u-iW(t))=|u|^2u, $ and after another time-variant substitution
%%%\begin{equation}\label{uu*}
$u-iW(t,x)=u^*(t)$
%%%\end{equation}
 to  the form
\begin{equation}\label{red}
  (-i\partial_t+\Delta)u^*=|u^*+iW(t)|^2(u^*+iW(t)), \  u|_{t=0}=u^0,
\end{equation}
which we look at as  a semilinear control system with the {\it inputs} $W(\cdot)$ belonging to ${\mathbf W}^{1,1}([0,T],\mathcal{F}_{\hat{\mathcal K}})$.
Obviously for each  input $W(\cdot)$ the
solution of (\ref{red}) exists and is unique on $[0,T]$.

Introduce the input-trajectory map  $E^*: W(\cdot) \mapsto u^*(\cdot)$ of the equation (\ref{red}).
The following result is essentially a corollary of Proposition~\ref{wecodep}.

\begin{lem}
\label{l1c}
 Consider a ball
$$\mathcal{B}_R=\{W(\cdot) \in {\mathbf W}^{1,1}([0,T],\mathcal{F}_{\hat{\mathcal K}})| \ \|W(\cdot) \|_{{\mathbf W}^{1,1}} \leq R\}.$$
Then
$$\exists L_R>0: \  \|u^*_2(t)-u^*_1(t)\|_H \leq L_R \int_0^T \|W_2(t)-W_1(t)\|_{\mathcal{F}_{\hat{\mathcal K}}}dt, \ \forall t \in [0,T],$$
 $\forall W_1(\cdot),W_2 (\cdot)\in \mathcal{B}_R$ and the corresponding trajectories  $u^*_1(t),u^*_2(t)$ of (\ref{red}).
 This means that the input-trajectory map  $E^*$  is Lipschitzian on $\mathcal{B}_R$, endowed  with  ${\mathbf L}^1([0,T],\mathcal{F}_{\hat{\mathcal K}})$-metric,  the space of trajectories $u^*(\cdot)$  endowed with $C([0,T],H^s)$-metric.  $\Box$
\end{lem}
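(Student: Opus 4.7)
The plan is to mimic the proof of Proposition~\ref{wecodep}: express the difference $u^*_2-u^*_1$ via Duhamel's formula, use that $e^{it\Delta}$ is an isometry of $H^s$, and close the resulting estimate by Gronwall's inequality. The only genuinely nontrivial ingredient is a uniform a~priori bound on the trajectories $u^*(\cdot,W)$ as $W$ ranges over $\mathcal{B}_R$; once this is secured, Lipschitzness of the cubic nonlinearity $F(u^*,W):=|u^*+iW|^2(u^*+iW)$ on the corresponding ball of $H^s$ follows immediately from Product Lemma~\ref{prol}, and Gronwall finishes the job.

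First I would establish the uniform bound. For $W\in\mathcal{B}_R$ the built-in condition $W(0)=0$ together with $\|W\|_{\mathbf{W}^{1,1}}\le R$ gives, for every $t\in[0,T]$, the pointwise estimate $\|W(t)\|_{H^s}\le\int_0^t\|\dot W(\tau)\|_{H^s}\,d\tau\le R$. Setting $u:=u^*+iW$, the equation \eqref{red} is equivalent to \eqref{nlsdot} with source $V=\sum_{k\in\hat{\mathcal K}}\dot w_k f_k$ of $\mathbf{L}^1_tH^s$-norm at most $R$, and Proposition~\ref{gloex} together with the standard defocusing-cubic a~priori estimate yields $\|u\|_{C([0,T],H^s)}\le M$ with $M=M(\|u^0\|_{H^s},R,T)$. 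Hence $\|u^*\|_{C([0,T],H^s)}\le M+R=:N(R)$ uniformly for $W\in\mathcal{B}_R$.

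Since $F$ is a cubic polynomial in $u^*,\bar u^*,W,\bar W$, Lemma~\ref{prol} provides a constant $K=K(N(R))$ such that, whenever all four of $u^*_j,W_j$ have $H^s$-norm bounded by $N(R)$,
$$\|F(u^*_2,W_2)-F(u^*_1,W_1)\|_{H^s}\le K\bigl(\|u^*_2-u^*_1\|_{H^s}+\|W_2-W_1\|_{H^s}\bigr).$$
Subtracting the Duhamel identities for $u^*_1$ and $u^*_2$ with the common initial datum $u^0$ and exploiting that $e^{it\Delta}$ is an isometry of $H^s$, one obtains
$$\|u^*_2(t)-u^*_1(t)\|_{H^s}\le K\int_0^t\|u^*_2(\tau)-u^*_1(\tau)\|_{H^s}\,d\tau+K\int_0^T\|W_2(\tau)-W_1(\tau)\|_{H^s}\,d\tau,$$
and Gronwall's lemma yields the announced estimate with $L_R:=Ke^{KT}$.

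The main obstacle is the uniform bound of the first step: $\mathcal{B}_R$ is not compact in $\mathbf{L}^1_t$, so uniformity of $\sup_t\|u^*(t,W)\|_{H^s}$ over $W\in\mathcal{B}_R$ does not follow formally from the continuity results of Section~\ref{prel} but genuinely relies on the defocusing cubic structure (the relevant a~priori estimates are available in, e.g., \cite{DGL}). Once that bound is in hand, the rest is the very same Gronwall calculation that already underlies Proposition~\ref{wecodep}.
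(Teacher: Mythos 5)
Your proposal is correct and follows essentially the same route as the paper: the paper likewise treats the equation for $u^*_2$ as a perturbation of that for $u^*_1$, invokes the Gronwall estimates already derived in Proposition~\ref{wecodep}, and uses the Product Lemma~\ref{prol} together with the pointwise bound $\|W(t)\|_{H^s}\le R$ (valid since $W(0)=0$ and $\|W\|_{\mathbf{W}^{1,1}}\le R$) to choose the Lipschitz function $\beta_c$ constant, of the form $C'(1+c^2+R^2)$. The only difference is one of emphasis: you spell out the uniform a~priori bound $\sup_t\|u^*(t)\|_{H^s}\le N(R)$ over all of $\mathcal{B}_R$, which the paper leaves implicit in its choice of the constant $c$; making that step explicit is a genuine (if minor) improvement in rigor, not a change of method.
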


We postpone the proof of Lemma~\ref{l1c} to  the Appendix, and now derive the  Main Result 2.

Consider the composition of maps
$$(\dot{w}_k)_{k \in \hat{\mathcal K}} \mapsto W(\cdot) \mapsto E_T^*(W)=E^*(W)|_{t=T}, $$%%
 where $E_T^*$ is the end-point map $W(\cdot) \mapsto u|_{t=T}$ for the equation (\ref{red}).

The relation between  the end-point maps of the controlled equations (\ref{nlsdot}) and (\ref{red}) provides the equality
$E_T((\dot{w})= E_T^*(W(\cdot))+iW(T)$ and therefore the image of $E_T$ (the attainable set) is contained in the image of the map
$$\Theta: (W(\cdot),\vartheta) \mapsto E_T^*(W(\cdot))+\vartheta, \ (W(\cdot),\vartheta) \in {\mathbf W}^{1,1}([0,T],\mathcal{F}_{\hat{\mathcal K}}) \times \mathbb{C}^\kappa . $$

Representing  ${\mathbf L}^1([0,T],\mathbb{C}^\kappa)$ as a union of the balls $\bigcup_{n \geq 1} \mathcal{B}_n$ of radii $n \in \mathbb{N}$ we note that the image  $\mathcal{I}(\mathcal{B}_n)$  under the map $\mathcal{I}: \dot{w}(\cdot) \mapsto (w(\cdot),w(T))$ is bounded
in ${\mathbf W}^{1,1}([0,T],\mathcal{F}_{\hat{\mathcal K}}) \times \mathbb{C}^\kappa$ and  pre-compact (completely bounded)  in  ${\mathbf W}^{1,1}([0,T],\mathcal{F}_{\hat{\mathcal K}}) \times \mathbb{C}^\kappa$, endowed  with the metric of  ${\mathbf L}^{1}([0,T],\mathcal{F}_{\hat{\mathcal K}}) \times \mathbb{C}^\kappa$.

By the Lemma~\ref{l1c} the map $E_T^*$
is Lipschitzian  in the metric of ${\mathbf L}^{1}([0,T],\mathcal{F}_{\hat{\mathcal K}})$;  therefore  $\Theta$ is  Lipschitzian  in the metric of  ${\mathbf L}^{1}([0,T],\mathcal{F}_{\hat{\mathcal K}}) \times \mathbb{C}^\kappa$ and hence  $E_T(\mathcal{B}_n)$ is contained in  completely bounded  image
$\Theta (\mathcal{I}(\mathcal{B}_n)) \subset H^s$.  Then  the attainable set of (\ref{nlsdot}) is contained in a countable union of pre-compacts $\bigcup_{n \geq 1} \Theta (\mathcal{I}(B_n))$ and by Baire category theorem has a dense complement in $H^s. \ \Box$

\section{Saturating sets of controlled modes and controllability}
\label{sscm}

As we  have established above  the saturating sets of controlled modes suffice for providing  the approximate controllability and controllability in projection on each finite-di\-men\-si\-onal subspace for the NLS equation \eqref{nls1}.

Here  we describe some   classes
of saturating sets in $\mathbb{Z}^d, \ d \geq 1$.

Starting with a set $\hat{\mathcal K} \subset \mathbb{Z}^d$ and recalling the  definition of proper   extension we define the
 sequence of sets $\mathcal{K}^j \subset \mathbb{Z}^d$:
\begin{equation}\label{setk2}
\mathcal{K}^1=\hat{\mathcal K}, \mathcal{K}^j  =  \left\{2m-n | \ m,n \in
\mathcal{K}^{j-1} \right\}, \  j=2, \ldots  ; \  \mathcal{K}^\infty =\bigcup_{j=1}^\infty \mathcal{K}^j.
\end{equation}
Taking $m=n$ in (\ref{setk2}) we conclude that $\mathcal{K}^{1} \subseteq \cdots \subseteq \mathcal{K}^{j} \subseteq  \cdots \subseteq \mathcal{K}^\infty $ and for each pair $j<j'$ the set $\mathcal{K}^{j'}$ is proper extension of $\mathcal{K}^{j}$. The set $\mathcal{K}^\infty $ is invariant with respect to the operation
\begin{equation}\label{oper}
(k,\ell) \mapsto 2k - \ell, \ k,\ell \in \mathbb{Z}^d.
\end{equation}

 A finite set $\hat{\mathcal K} \subset \mathbb{Z}^d$
of modes is  saturating if and only if
$\mathcal{K}^\infty=\mathbb{Z}^d.$

\begin{proposition}
\label{z1}
For each $k \in \mathbb{Z}$ the  two-element set $\hat{\mathcal{K}}=\{k,k+1\}$ is saturating in $\mathbb{Z}.  \ \Box$
\end{proposition}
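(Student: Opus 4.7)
\smallskip

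\textbf{Proof proposal.} The plan is to reduce the claim to showing that iterating the operation $(m,n)\mapsto 2m-n$ on $\{k,k+1\}$ eventually reaches every integer, which is a straightforward inductive arithmetic fact.

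First, I would observe that the operation $(m,n)\mapsto 2m-n$ is translation-equivariant: adding a constant to both $m$ and $n$ adds the same constant to $2m-n$. Hence, up to the affine change of index $j\mapsto j-k$, it is enough to treat the case $\hat{\mathcal{K}}=\{0,1\}$. I will then work with the sets $\mathcal{K}^j$ defined by \eqref{setk2} and aim to show $\mathcal{K}^\infty=\mathbb{Z}$, which is equivalent to saturation: any finite $\mathcal{K}\subset\mathbb{Z}$ lies in some $\mathcal{K}^j$, hence in a proper extension of $\hat{\mathcal{K}}$.

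Next I would prove by induction on $j\ge 1$ that
\[
\{-j+1,-j+2,\ldots,j\}\subseteq \mathcal{K}^j.
\]
The base $j=1$ is just $\{0,1\}=\hat{\mathcal{K}}$. For the inductive step, assuming $\{a,a+1,\ldots,b\}\subseteq\mathcal{K}^j$ with $a=-j+1$, $b=j$, I apply the operation to the pairs $(a,a+1)$ and $(b,b-1)$ inside $\mathcal{K}^j$ to obtain
\[
2a-(a+1)=a-1,\qquad 2b-(b-1)=b+1,
\]
which puts $\{a-1,\ldots,b+1\}=\{-j,-j+1,\ldots,j+1\}$ into $\mathcal{K}^{j+1}$. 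This already yields $\mathcal{K}^\infty=\mathbb{Z}$.

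The only bookkeeping step is checking that the inclusion $\mathcal{K}^j\subseteq\mathcal{K}^{j+1}$ from \eqref{setk2} can be realized as a chain of elementary extensions in the sense of Definition~\ref{dfsa}, since proper extensions are defined through a one-element-at-a-time chain, whereas $\mathcal{K}^{j+1}$ adds possibly many new points at once. This is immediate: enumerate $\mathcal{K}^{j+1}\setminus\mathcal{K}^j=\{p_1,\dots,p_N\}$, each $p_i=2m_i-n_i$ with $m_i,n_i\in\mathcal{K}^j$, and set $\mathcal{L}^0=\mathcal{K}^j$, $\mathcal{L}^i=\mathcal{L}^{i-1}\cup\{p_i\}$; since $m_i,n_i\in\mathcal{L}^{i-1}$, each $\mathcal{L}^i$ is an elementary extension of $\mathcal{L}^{i-1}$. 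Concatenating such chains for $j=1,\ldots,J$ expresses $\mathcal{K}^J$ as a proper extension of $\hat{\mathcal{K}}$. Given any finite $\mathcal{K}\subset\mathbb{Z}$, choose $J$ with $\mathcal{K}\subset\{-J+1,\ldots,J\}\subseteq\mathcal{K}^J$, and the saturating property follows. There is no real obstacle here; the whole content of the proposition is the one-line inductive observation that applying $2m-n$ to the endpoints of a block of consecutive integers extends the block by one on each side.
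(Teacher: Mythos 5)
Your proof is correct and follows essentially the same route as the paper: an induction showing that a block of consecutive integers containing $\hat{\mathcal K}$ grows by one element at each end under the operation $(m,n)\mapsto 2m-n$ (the paper uses the pairs $(k\pm 1, k\mp(N-1))$ where you use the current endpoints, a cosmetic difference). Your extra bookkeeping step, realizing $\mathcal{K}^j\subseteq\mathcal{K}^{j+1}$ as a one-element-at-a-time chain of elementary extensions, is a detail the paper leaves implicit in the discussion surrounding \eqref{setk2}, and it is handled correctly.
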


\begin{proof}
Note that $k-1=2k-(k+1) \in \mathcal{K}^\infty$.

Assume that  for some natural $N$: $k-N,k-N+1, \ldots , k+N \in  \mathcal{K}^\infty$. Then
\begin{eqnarray*}
% \nonumber to remove numbering (before each equation)
2(k + 1)-(k - (N-1))=k + (N+1) \in \mathcal{K}^\infty,  \\
  2(k - 1)-(k + (N-1))=k - (N+1) \in \mathcal{K}^\infty ,
\end{eqnarray*}
and the needed conclusion follows by induction.
\end{proof}

\begin{proposition}
\label{satu}
Let the vectors $k_1, \cdots , k_d \in \mathbb{Z}^d$ be such that
\begin{equation}\label{wd}
D=\det\left(k_1 , k_2 , \cdots , k_d\right)= \pm 1.
\end{equation}
 Then the $2^d$-element set
\begin{equation}\label{satd}
\hat{\mathcal{K}}^d=\left\{\left.\sum_{j=1}^pk_{i_j}\right| \ i_1 < \cdots < i_p, \ 0 \leq p \leq d \right\} \subset \mathbb{Z}^d    \end{equation}
  is saturating. $\Box$
\end{proposition}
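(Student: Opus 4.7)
The plan is to reduce to a normalized case by a unimodular change of basis and then proceed by induction on the dimension.

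First, since $\det(k_1,\ldots,k_d)=\pm 1$, the vectors $k_1,\ldots,k_d$ form a $\mathbb{Z}$-basis of $\mathbb{Z}^d$. The unique $\mathbb{Z}$-linear automorphism $A\colon\mathbb{Z}^d\to\mathbb{Z}^d$ sending $k_j\mapsto e_j$ commutes with the binary operation $(k,\ell)\mapsto 2k-\ell$ (which is $\mathbb{Z}$-linear in each argument), so $A$ maps the closure $\mathcal{K}^\infty$ of $\hat{\mathcal{K}}^d$ under this operation onto the corresponding closure of $A(\hat{\mathcal{K}}^d)=\{0,1\}^d$. It therefore suffices to show that the hypercube $\{0,1\}^d\subset\mathbb{Z}^d$ is saturating.

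I would prove this by induction on $d$. The base case $d=1$ is exactly Proposition~\ref{z1} applied with $k=0$. For the inductive step, I focus on the two affine hyperplane slices $\mathbb{Z}^{d-1}\times\{0\}$ and $\mathbb{Z}^{d-1}\times\{1\}$. The key remark is that each such slice is stable under the extension operation when both operands lie in it:
\[
2(v_1,c)-(v_2,c)=(2v_1-v_2,\,c).
\]
The induced binary operation on the first $d-1$ coordinates is exactly the extension operation in dimension $d-1$, and the intersection of $\{0,1\}^d$ with $\mathbb{Z}^{d-1}\times\{c\}$ equals $\{0,1\}^{d-1}\times\{c\}$. By the induction hypothesis, the in-slice closure of $\{0,1\}^{d-1}\times\{c\}$ is the whole slice, hence
\[
\bigl(\mathbb{Z}^{d-1}\times\{0\}\bigr)\,\cup\,\bigl(\mathbb{Z}^{d-1}\times\{1\}\bigr)\;\subseteq\;\mathcal{K}^\infty.
\]

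To finish, fix an arbitrary $v\in\mathbb{Z}^{d-1}$ and look at the vertical line $L_v=\{(v,n):n\in\mathbb{Z}\}$. This line is also preserved by the operation, since $2(v,m_1)-(v,m_2)=(v,\,2m_1-m_2)$. The previous step puts the two points $(v,0)$ and $(v,1)$ into $\mathcal{K}^\infty\cap L_v$, and Proposition~\ref{z1} applied to the last coordinate then yields the entire line $L_v\subset\mathcal{K}^\infty$. As $v$ ranges over $\mathbb{Z}^{d-1}$ this exhausts $\mathbb{Z}^d$, closing the induction.

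There is no serious obstacle in this plan; the only delicate point is the verification that the extension operation genuinely restricts to the chosen affine slices and lines. That is immediate because $(k,\ell)\mapsto 2k-\ell$ is an affine combination with integer coefficients summing to $1$, so it preserves membership in any affine sublattice. Once this is in place, the induction and the one-dimensional result do all the remaining work.
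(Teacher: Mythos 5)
Your proof is correct, but it follows a genuinely different route from the paper's. You use the hypothesis $D=\pm 1$ at the outset, conjugating by the unimodular automorphism $k_j\mapsto e_j$ (which commutes with $(k,\ell)\mapsto 2k-\ell$) to reduce to the unit cube $\{0,1\}^d$, and then run an induction on dimension: the affine slices $\mathbb{Z}^{d-1}\times\{c\}$ and the vertical lines $L_v$ are preserved by the operation because it is an integer affine combination with coefficients summing to $1$, so everything collapses onto the one-dimensional Proposition~\ref{z1}. The paper instead keeps the original coordinates and proves a sharper intermediate fact: for \emph{any} set $\{\mathbf{0},\ell_1,\dots,\ell_N\}$ the closure $\mathcal{K}^\infty$ is exactly the set of integer combinations $\sum_s\alpha_s\ell_s$ with at most one odd coefficient; it then observes that the subset-sum structure of $\hat{\mathcal{K}}^d$ absorbs all odd coefficients into a single element of the set, and only at the last step invokes Cramer's rule with $D=\pm1$. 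Your argument is more geometric and leans entirely on the $1$-D case; the paper's parity characterization is a stronger tool in that it describes $\mathcal{K}^\infty$ exactly for an arbitrary generating set containing $\mathbf{0}$ --- in particular it explains why the bare set $\{\mathbf{0},k_1,\dots,k_d\}$ would \emph{not} suffice and all $2^d$ subset sums are needed, something your induction does not make visible. Both proofs are complete; the one delicate point in yours, that the operation restricts to affine sublattices, is correctly justified.
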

\begin{remark}
In \eqref{satd} $p=0$ corresponds to the null vector $\mathbf{0} \in \mathbb{Z}^d. \ \Box$
\end{remark}

\begin{proof} i) Note that if $z \in \mathcal{K}^\infty$, then $-z=2 \cdot \mathbf{0}-z  \in   \mathcal{K}^\infty$.

ii) We prove first  that  for {\it any} set  $\hat{\mathcal{K}}=\{\mathbf{0},\ell_1, \ldots ,\ell_N\}$ the set     $\mathcal{K}^\infty$, defined by \eqref{setk2} coincides with the set $C^{N}_{e}$ of all integer linear combinations
$\sum_{s=1}^N \alpha_s \ell_s$ with at most one  odd coefficient $\alpha_s$.

Indeed $C^{N}_{e} \supset \hat{\mathcal{K}}$ and   is  invariant with respect to the operation \eqref{oper}.

 Also $C^{N}_{e} \supset -\hat{\mathcal{K}}$  and if  $\pm z \in \mathcal{K}^\infty$, then $\forall s_0: \  \pm z +2\ell_{s_0} \in \mathcal{K}^\infty$ and
 by induction $\pm z + \sum_{s=1}^N 2\alpha_s \ell_s \in   \mathcal{K}^\infty$, for any   $\alpha_1 , \ldots , \alpha_N  \in \mathbb{Z}$. In particular each  linear combination $\sum_{s=1}^N 2\alpha_s \ell_s$ with even coefficients belongs to $\mathcal{K}^\infty$, together with  $\ell_{s_0}+\sum_{s=1}^N 2\alpha_s \ell_s \in   \mathcal{K}^\infty$ for each $s_0$.
Hence $C^{N}_{e}=\mathcal{K}^\infty$.

iii) Now we prove that for $\hat{\mathcal K}^d$ defined by
\eqref{satd} the extension $\mathcal{K}^\infty$ contains all integer linear combinations $\sum_{j=1}^d\alpha_jk_j$.

Picking any combination  $\sum_{j=1}^d\alpha_jk_j$ we select  its odd coefficients
$\alpha_{j_1}, \ldots , \alpha_{j_p}$,  $j_1 < \cdots < j_p$. Then
$$\sum_{j=1}^d\alpha_jk_j= \left(k_{j_1}+\cdots + k_{j_p}\right)+\sum_{j=1}^d\alpha'_jk_j,$$
with all $\alpha'_j$ even. Hence $\sum_{j=1}^d\alpha_jk_j$ is representable as a linear combination of the elements of  \eqref{satd} with at most  one odd coefficient.
By ii) this proves that $\sum_{j=1}^d\alpha_jk_j \in \mathcal{K}^\infty$.

iv) Finally we prove that whenever \eqref{wd} holds,   the set  of all integer combinations  $\sum_{j=1}^d\alpha_jk_j$  coincides with $\mathbb{Z}^d$.

  Picking any $\ell \in \mathbb{Z}^d$, we  solve the equation
  $\sum_{j=1}^d\alpha_jk_j=\ell $ by Kramer rule
  computing  $\alpha_j=D_j/D, \ j=1, \ldots , d,$ where $D_j=\det(k_1, \ldots , \underbrace{\ell}_j, \ldots k_d)$ are integer, and $D=\pm 1$ according to \eqref{wd}.
  \end{proof}

\begin{remark}\label{k111}
An example of a saturating set  $\hat{\mathcal K} \subset \mathbb{Z}^d$  is the set
 of all vectors with components equal either $0$ or $1$ or, in other words, the set of vertices of the unit cube $[0,1]^n.  \ \Box$
\end{remark}

%%%%%%%%%%%%%%%%%%%%%%%%%%%%%%%%%%%%%%%%%%%%%%%%%%%%%%%%%%%%%%%

\section{Appendix: some proofs}

\subsection{Proof of the Lemma~\ref{limeq}}
\begin{proof}
Similarly to   (\ref{umut}) we get an estimate (all the norms are taken in $H$):
\begin{eqnarray*}
% \nonumber to remove numbering (before each equation)
 \|u^\eps(t)-\tilde{u}(t)\| \leq   \left\|e^{i\eps t \Delta}u^0-u^0 \right\|+\left\|\int_0^te^{-i\eps\tau \Delta}\eps G(\tau,u^\eps(\tau),b)d\tau\right\|+\\
  +\left\|\int_0^t e^{-i\eps\tau \Delta}\phi(\tau , u^\eps(\tau),b)-\phi(\tau,\tilde{u}(\tau),b)d\tau\right\| \leq  \\
  \leq \eps\left\|\int_0^te^{-i\eps\tau \Delta}G(\tau,u^\eps (\tau),b)d\tau\right\|   + \left\|(e^{i\eps t \Delta}-I)u^0\right\|  + \\ +
     \left\|\int_0^t\!\!(e^{-i\eps \tau \Delta}-I)\phi(\tau,\tilde{u}(\tau),b)d\tau \right\| +\int_0^t \!\! \left\|\phi(\tau , u^\eps(\tau),b)-\phi(\tau,\tilde{u}(\tau),b)\right\| d\tau.
\end{eqnarray*}
The rightmost addend admits an upper bound
$\int_0^t \beta_c(\tau)\|u^\eps(\tau)-\tilde{u}(\tau)\| d\tau $.  We would arrive to the needed conclusion  by virtue of  Gronwall inequality,  after proving that the other three addends tend to $0$, as $\eps \to 0$.

We only comment on  the addend $\left\|\int_0^t(e^{-i\eps \tau \Delta}-I)\phi(\tau,\tilde{u}(\tau),b)d\tau\right\|$, the other two assertions being obvious.

    By the assumptions of the Lemma the map $(\tau,b) \mapsto \phi(\tau , \tilde{u}(\tau),b)$
    is continuous on $[0,T] \times B$ and hence its range $R$ is a compact subset of Hilbert space $H$.
    Being $\pi_N$ the orthogonal projection of $H$ on $\mathcal{F}_N$ and  $R_N=\pi_N(R)$, we assert that
    $$\forall \eps >0 \exists N, \forall n \geq N: \ \mbox{dist}(R,R_N) < \eps/2. $$
    Also there exists $c_N>0$ such that $$\forall y \in R_N, \ \forall t \in [0,T]: \
    \left\|\left(e^{-i\eps t \Delta} -I\right)y\right\| \leq c_N \eps .$$
    Given that $\|e^{-i\eps \tau \Delta}-I\| \leq 2$ one concludes
    \begin{eqnarray*}
    % \nonumber to remove numbering (before each equation)
      \forall z \in R: \ \left\|\left(e^{-i\eps t \Delta} -I\right) z\right\| \leq \left\|\left(e^{-i\eps t \Delta} -I\right)(z- \pi_N z) \right\|+ \\
       +\left\|\left(e^{-i\eps t \Delta} -I\right) \pi_N z\right\| \leq 2 (\eps/2)+c_N\eps =(c_N+1)\eps .
    \end{eqnarray*}
    and
$$\left\|\int_0^t(e^{-i\eps \tau \Delta}-I)\phi(\tau,\tilde{u}(\tau),b)d\tau \right\| \leq (c_N+1)\eps T.$$
\end{proof}

\subsection{Proof of the Lemma~\ref{5.4}.}
In the proof we use $\eps$ in place of  $\eps' $.

  As in the Remark~\ref{smow} we can smoothen the family $w_{2r-s}(t,b)$  arriving to
  a family of smooth functions $\check{w}(t,b)$ such that $\forall b: \ \|\check{w}(t,b)-w_{2r-s}\|_{{\mathbf L}^1_t} \leq \eps /2$, and $\check{w}(t,b),\partial_t \check{w}(t,b)$ depend on $b$ continuously in ${\mathbf L}^1_t$-metric. This implies that $\|\check{w}(t,b)\|_{{\mathbf L}^\infty}$ are equibounded  by some $C_w>0$, and for each $\tau \in [0,T]$ the values $\check{w}(\tau ,b)$ depend continuously on $b$.

      First we choose real-valued nonnegative continuous function $\check{v}_r(t)$ such that $\check{v}^2_r(t)$  is:   piecewise-linear, vanishing at $0,T$,  and (constant) equal to $\pi(T-\varepsilon^2)^{-1}$ on $[\varepsilon^2 ,T-\varepsilon^2]$, while being  linear on $[0,\varepsilon^2]$ and $[T-\varepsilon^2 , T]$.

      By the construction  $\int_0^T\check{v}^2_r(t)dt=\pi$.
      Assuming $\eps^2 < T/2$, we get  $\|\check{v}^2_r\|_{L^\infty} \leq 2\pi/T$.

Denote $\mathcal{I}_\eps=[0,\eps^2] \bigcup [T-\eps^2,T]$ and let  $w^\eps(t,b)$ be the restriction of the function $\pi^{-1}(T-\eps^2)\check{w}(t,b)$ onto $[0,T]\setminus \mathcal{I}_\eps$ .  Let $$\int_{\eps^2}^{T-\eps^2}|w^\eps(t,b)|^2dt=A(b), \ A=\max_{b \in B} A(b);$$
   $A(b)$ is continuous and the maximum is achieved.
    Take $N=[A/\pi]+1$. Extend
     \footnote{For example by polynomials of fixed degree, defined on $[0,\eps^2]$ and $[T-\eps^2]$, with coefficients depending continuously on $b$}
     the family $|w^\varepsilon(t,b)|$ to  a family of Lipschitzian functions $\check{v}_s(t,b)$ on $[0,T]$ such  that $\check{v}_s(0,b)=\check{v}_s(T,b)=0$, $\int_0^T|\check{v}_s(t,b)|^2dt=\pi N$
     and $\check{v}_s(t,b)$, $\partial_t\check{v}_s(t,b)$ depend continuously on $b$ in ${\mathbf L}^1_t$-metric.

Then
$$ \int_0^T|\check{v}_r(t)|^2+|\check{v}_s(t,b)|^2dt=\pi (N+1). $$

Obviously $\check{v}_r^2(t;b)\check{v}_s(t;b)=|w_{2r-s}(t,b)|$ on $[\eps^2,T-\eps^2]$ and
$$\int_{\mathcal{I}_\eps} |\check{v}_s(t)|^2dt \leq \pi N ;$$
hence $\int_{\mathcal{I}_\eps}|\check{v}_s(t;b)|dt  \leq  \varepsilon \sqrt{2\pi N} $ by Cauchy-Schwarz inequality.

Then
\begin{eqnarray*}
 \int_0^T\left|\check{v}^2_r(t)\check{v}_s(t,b)-|w_{2r-s}(t,b)|\right|dt \leq
\frac{\eps}{2}+ \\ + \int_0^T\left|\check{v}^2_r(t)\check{v}_s(t,b)-|\check{w}(t,b)|\right|dt \leq \eps/2+  \int_{\mathcal{I}_\eps}\left(\left|\check{v}^2_r(t)\check{v}_s(t,b)\right|+ |\check{w}(t;b)|\right)dt \leq \\  \leq \frac{\eps}{2}+   \|\check{v}^2_r(t)\|_{{\mathbf L}^\infty}\varepsilon \sqrt{2\pi N}+2C_w \eps^2 \leq
\frac{\eps}{2}+(2\pi \eps /T)\sqrt{2\pi N}+2C_w \eps^2  .  \Box
\end{eqnarray*}

\subsection{Proof of Lemma~\ref{coconv}.}
For a continuous  $\tilde{u}(t)$  and $\Phi$ possessing Lipschitzian property,
we prove  that $\forall \delta >0 \ \exists \delta '>0$ such that
$\forall \phi \in \Phi$:
\begin{equation}\label{tilu}
  \sup_{t,t' \in[0,T],\|u\|\leq b}\left\|
\int\limits_t^{t'} \phi(\tau,u)d \tau \right\| < \delta ' \Rightarrow \\   \sup_{t,t' \in[0,T]}\left\|
\int\limits_t^{t'} \phi(\tau , \tilde{u}(\tau))d \tau \right\| < \delta .
\end{equation}

Indeed if $\omega(\tau)$ is the modulus of continuity for $\tilde{u}(t)$ and $\sup_{t \in [0,T]}\|\tilde{u}(t)\| \leq c$, then
\begin{equation*}
% \nonumber to remove numbering (before each equation)
 \sup_{t,t' \in[0,T]}\left\|
\int\limits_t^{t'} \phi(\tau , \tilde{u}(\tau))d \tau \right\|=\left\|
\int\limits_{\underline{t}}^{\underline{t}'} \phi(\tau , \tilde{u}(\tau))d \tau \right\| \leq
\sum_{j=0}^{N-1}\left\|\int\limits_{t_j}^{t_{j+1}} \phi(\tau , \tilde{u}(\tau))d \tau \right\|,
\end{equation*}
where $\underline{t}=t_0 < t_1 < \cdots < t_N=\underline{t}'$ is a partition of $[\underline{t},\underline{t}'] \subset [0,T]$ into $N \leq T/\eta$ subintervals of length $\eta$. Then
 \begin{eqnarray*}
 % \nonumber to remove numbering (before each equation)
    \sup_{t,t' \in[0,T]}\left\|
\int\limits_t^{t'} \phi(\tau , \tilde{u}(\tau))d \tau \right\| \leq  \sum_{j=0}^{N-1}\left\|\int\limits_{t_j}^{t_{j+1}} \left(\phi(\tau , \tilde{u}(\tau))- \phi(\tau , \tilde{u}(t_j))\right)d \tau \right\|+\\
+ \sum_{j=0}^{N-1}\left\|\int\limits_{t_j}^{t_{j+1}} \phi(\tau , \tilde{u}(t_j))d \tau \right\| \leq
\sum_{j=0}^{N-1} \int\limits_{t_j}^{t_{j+1}} \beta_c(\tau)\|\tilde{u}(\tau))- \tilde{u}(t_j)\| d \tau + \\  +N\|\phi\|_{rx} \leq
 C\omega(\eta)+\frac{T}{\eta}\|\phi\|_{rx}.
 \end{eqnarray*}
Choosing $\eta=\|\phi\|_{rx}^{1/2}$ we get
$$\sup_{t,t' \in[0,T]}\left\|
\int\limits_t^{t'} \phi(\tau , \tilde{u}(\tau))d \tau \right\| \leq C\omega(\|\phi\|_{rx}^{1/2})+T\|\phi\|_{rx}^{1/2} $$
and conclude (\ref{tilu}).

Let us introduce
$$ \ \tilde{\Phi}=\{\varphi(\cdot)| \ \varphi(\cdot)=\phi(\cdot,\tilde{u}(\cdot)),\ \phi \in \Phi\}. $$
According to the aforesaid it suffices to prove
that $\forall \eps>0 \exists \delta >0$ such that $\forall \varphi \in \tilde{\Phi}$:
\begin{equation}\label{cocu}
        \|\varphi\|^{rx}< \delta \Rightarrow   \left\|\int_0^t e^{-i\tau \Delta}\varphi(\tau)d\tau \right\| < \eps .
\end{equation}

The set $R=\{\varphi(\tau)| \ \tau \in [0,T], \varphi \in \tilde{\Phi}\} $
is completely bounded by the assumptions of the Lemma.

                Taking for a natural $n$ the orthogonal projection $\Pi_n$ onto $\mathcal{F}_n=\mbox{Span}\{f_k| \ |k| \leq n \}$, we  conclude by   precompactness of $R$  that
$\sup_{x \in R}\|x-\Pi_n x\| \rightarrow 0$, as $n \rightarrow \infty.$

Take a partition $0=\tau_0 < \tau_1 < \cdots < \tau_N=T$ of the interval $[0,T]$ into subintervals of lengths $\eta=T/N$. We represent the integral in  (\ref{cocu}) as a sum
\begin{eqnarray*}
% \nonumber to remove numbering (before each equation)
\int_0^t e^{-i\tau \Delta}\varphi(\tau)d\tau=\int_0^t e^{-i\tau \Delta}\left(\varphi(\tau)-\Pi_n\varphi(\tau)\right)d\tau +  \\
+ \sum_{j=1}^\omega e^{-i\tau_j \Delta} \int_{\tau_{j-1}}^{\tau_j} \Pi_n\varphi(\tau)d\tau
+ \\ + \sum_{j=0}^{N-1}  \int_{\tau_{j-1}}^{\tau_j}e^{-i\tau_j \Delta}\left(e^{-i(\tau-\tau_j) \Delta}- I \right) \Pi_n\varphi(\tau)d\tau .
\end{eqnarray*}

Recalling that:
  \begin{description}
          \item[i] $e^{-i\tau \Delta}$ is an isometry of $H$;
          \item[ii] $\left\|\int_{\tau_{j-1}}^{\tau_j} \Pi_n\varphi(\tau)d\tau \right\| \leq \|\varphi \|^{rx};$
          \item[iii] $\|\left(\varphi(\tau)-\Pi_n\varphi(\tau)\right)\| \leq \rho_n$, and $\rho_n \stackrel{n \rightarrow \infty}{\longrightarrow} 0$ uniformly for  $(\tau ,\varphi) \in [0,T] \times \tilde{\Phi}$;
       \item[iv]  $\sup_{0 \leq \xi \leq \tau}\|\left(e^{-i\xi \Delta}- I \right)\circ \Pi_n\|=\gamma_n(\tau)$, $\forall n: \ \lim_{\tau \rightarrow 0} \gamma_n(\tau) = 0$,
        \end{description}
   we conclude
   \begin{equation}\label{intri}
\left\|\int_0^t e^{-i\tau \Delta}\varphi(\tau)d\tau \right\| \leq T\rho_n+ T\eta^{-1} \|\varphi \|^{rx}+\gamma_n(\eta)\int_0^T\|\varphi(\tau)\|d\tau .
   \end{equation}

Recall that $\int_0^T\|\varphi(\tau)\|d\tau $ are bounded by a constant $c_1$ for all $\varphi \in \tilde{\Phi}$.

Taking  $n$ large enough so that $T \rho_n < \eps/3$, we then choose $\eta>0$
small enough so that  $c_1\gamma_n(\eta) < \eps/3$. If we impose
$\|\varphi \|^{rx} <\eps\eta/3T$, then (\ref{intri}) will imply
$\left\|\int_0^t e^{-i\tau \Delta}\varphi(\tau)d\tau \right\| < \eps . \ \Box$

\subsection{Proof of Lemma~\ref{l1c}.} By the inequalities (\ref{gro54})-(\ref{55}) we get
$$\|u^*_2(t)-u^*_1(t)\|_H \leq C\int_0^T \|\Phi_{12}(\tau , u^*_1(\tau))\|_Hd\tau e^{C'\int_0^T\beta_c(\tau)d\tau},$$
where
% \nonumber to remove numbering (before each equation)
$$\Phi_{12}(\tau , u)=|u+iW_1(t)|^2(u+iW_1(t))-|u+iW_2(t)|^2(u+iW_2(t)),$$
while  $\beta_c(t)$ characterizes Lipschitzian property:
\begin{eqnarray*}
\| |u'+iW(t)|^2(u'+iW(t))-|u+iW(t)|^2(u+iW(t)) \|_H \leq      \nonumber \\
\leq \beta_c(t)\|u'-u \|_H, \   \forall W(\cdot) \in B_R, \ \|u\|_{H} \leq c. %%\label{bb}
\end{eqnarray*}

By the Product Lemma~\ref{prol}
 $\beta_c$ can be chosen constant, equal to $C'(1+c^2+R^2)$, as far as  $W(t)$ are trigonometric polynomials in $x$  and their norms  in ${\mathbf W}^{1,1}([0,T], \mathcal{F}_{\hat{\mathcal{K}}}$ are equibounded.

Similarly
  $$\|\Phi_{12}(\tau , u^*_1(\tau))\|_H \leq C_1(1+c^2+R^2)\|W_2(\tau)-W_1(\tau)\|_H.$$
  Then for $L_R=CC_1(1+c^2+R^2)e^{C'(1+c^2+R^2)T}$:
  $$\|u^*_2(t)-u^*_1(t)\|_H \leq L_R\!\! \int_0^T \!\!\! \|W_2(\tau)-W_1(\tau) \|d\tau . \ \Box$$

% ----------------------------------------------------------------

\end{document}